\documentclass[a4paper,11pt]{amsart}
\usepackage[T1]{fontenc}
\usepackage{mathtools,amssymb,needspace}
\usepackage[hidelinks]{hyperref}

\newtheorem{theorem}{Theorem}[section]
\newtheorem{proposition}[theorem]{Proposition}
\newtheorem{lemma}[theorem]{Lemma}
\newtheorem{corollary}[theorem]{Corollary}
\newtheorem{question}[theorem]{Question}
\theoremstyle{definition}
\newtheorem{definition}[theorem]{Definition}
\newtheorem{example}[theorem]{Example}
\theoremstyle{remark}
\newtheorem{remark}[theorem]{Remark}

\newcommand{\Pow}{\mathcal P}
\newcommand{\dom}{\mathrel{\triangleright}}
\newcommand{\D}{D}
\newcommand{\branch}{\mathbin{\boxtimes}}
\newcommand{\set}[1]{\left\{#1\right\}}
\renewcommand\iff{\quad\text{iff}\quad}

\title{Generalized Quantifiers: Scope Dominance and Branching}
\author{Fredrik Engstr\"om}
\address{Department of Philosophy, Linguistics and Theory of Science,
University of Gothenburg, Box 200, SE-405 30 G\"oteborg, Sweden}
\email{fredrik.engstrom@gu.se}
\date{\today}
\subjclass[2020]{03B65, 03E05, 03E35, 03E55, 06A07}
\keywords{generalized quantifier, scope dominance, branching product,
ultrafilter, measurable cardinal}

\begin{document}

\begin{abstract}
Let $P$ and $Q$ be proper upward monotone unary generalized quantifiers on a
fixed domain $D$.  We study the validity, over all structures $M$ with domain $D$, of 
\[
M \models Px\,Qy\,R(x,y) \rightarrow  Qy\,Px\,R(x,y),
\] 
called \emph{scope dominance}. The iteration of two quantifiers lies between
their branching product, which is generated by rectangles, and the dual of
the branching product of their duals. We give criteria for equality with
these bounds, obtain a new proof of the countable characterization of scope dominance, and
characterize the case in which the inner quantifier is ``at least $\kappa$
many.'' Under the Generalized Continuum Hypothesis, Goldberg's theorem shows
that, for ultrafilters, scope dominance is equivalent to equality between
iteration and the branching product, whereas an elementary counterexample on
a domain of size $\aleph_2$ shows that this equivalence fails for arbitrary
upward monotone quantifiers. For the case where both quantifiers are filters,
the question becomes set-theoretic: the existence of a counterexample is
equiconsistent with the existence of a measurable cardinal.
\end{abstract}

\maketitle

\section{Introduction}

A unary generalized quantifier on a domain $\D$ is a family of subsets of
$\D$; see \cite{Mostowski1957,Lindstrom1966}.  
We work with local quantifiers on a fixed domain and do not require
invariance under permutations of that domain.
Iterating two such
quantifiers gives the familiar two scope readings of a transitive
construction \cite{PetersWesterstahl2006}.  Following Westerst\aa hl
\cite{Westerstahl1986} and Altman, Peterzil, and Winter
\cite{AltmanPeterzilWinter2005}, we say that $P$ \emph{dominates} $Q$ when
the $PQ$ reading entails the $QP$ reading for every binary relation.

The elementary example is that $\exists$ dominates $\forall$ on every
nonempty domain:
$\models_{\D}\exists x\,\forall y\,R(x,y)\rightarrow
\forall y\,\exists x\,R(x,y)$.
The converse fails whenever $|\D|\geq 2$:
$\not\models_{\D}\forall y\,\exists x\,R(x,y)\rightarrow
\exists x\,\forall y\,R(x,y)$.
Here $\models_{\D}$ denotes validity over all structures with domain $\D$.

Alongside the two linear scope readings there is a branching reading
\cite{PetersWesterstahl2006}.  For upward monotone quantifiers $P$ and $Q$,
this reading is true of $\varphi(x,y)$ in a structure $M$ with domain $\D$
exactly when there are $A\in P$ and $B\in Q$ such that
$M\models\varphi(a,b)$ for every $(a,b)\in A\times B$.  This defines the
\emph{branching product} $P\branch Q$.  Unlike iteration, the branching
product selects the two sets independently.  It is symmetric under
transposition: a relation $R$ belongs to $P\branch Q$ if and only if
$R^{-1}$ belongs to $Q\branch P$.  The central question of the paper is
whether scope dominance forces the iteration of either $(P,Q)$ or the
reversed pair of duals $(Q^d,P^d)$ to coincide with its branching product.

The finite-domain theory of scope dominance goes back to Westerst\aa
hl \cite{Westerstahl1986}, who treated upward monotone global determiners
satisfying conservativity, permutation invariance, and extension. For proper
upward monotone local quantifiers, the finite-domain existential--universal
characterization is stated and proved in \cite{AltmanPeterzilWinter2005}; see also \cite{BenAviWinter2009}. Ben-Avi and Winter \cite{BenAviWinter2004}
characterized dominance for mixed upward/downward pairs on finite domains.
They also gave numerical characterizations for pairs of monotone quantifiers
obtained by fixing the first argument of a conservative,
permutation-invariant determiner. Altman, Peterzil, and Winter extended the
upward-monotone characterization to countable domains
\cite{AltmanPeterzilWinter2005}, showing that infinite domains admit dominance
 relations beyond the finite existential--universal alternatives. Westerst\aa
 hl \cite{Westerstahl1996} characterized self-commuting quantifiers on
 arbitrary domains, without assuming monotonicity. In their 2009 survey,
 Ben-Avi and Winter \cite{BenAviWinter2009} identified full characterizations
 of scope dominance for arbitrary quantifiers and over domains of arbitrary
 cardinality as open problems.

The theory of generalized quantifiers originates with Mostowski
\cite{Mostowski1957} and Lindstr\"om \cite{Lindstrom1966}. For a comprehensive
logical and linguistic background, see Peters and Westerst\aa hl
\cite{PetersWesterstahl2006}.

Goldberg's recent work on products of ultrafilters provides a direct connection
with the branching-product question. For ultrafilters, iteration is the
tensor (or Fubini) product, the branching product is the Cartesian
product filter, and scope dominance is equivalent to commutativity of
the tensor product under coordinate transposition. Goldberg proved
under GCH that two ultrafilters commute precisely when their Cartesian
product is an ultrafilter, equivalently when their tensor and Cartesian
products coincide \cite[Theorem~2.16]{Goldberg2025}. Thus the ultrafilter
instance of the branching-product dichotomy holds under GCH. Since
ultrafilters are self-dual, the two branching alternatives are equivalent
in this case. 

There are also positive cases outside the ultrafilter setting. If the
inner quantifier means ``at least $\kappa$ many'', where
$1\leq\kappa\leq|\D|$ and $\D$ is infinite, dominance forces the lower
branching equality; see Proposition~\ref{prop:inner-cardinality}.
Nevertheless, the counterexample in
Section~\ref{sec:counterexample} shows in ZFC that dominance can hold
while both branching equalities fail, even when the inner quantifier is
the co-countable filter. Its outer quantifier is not a filter. Thus the
ultrafilter theorem does not extend to arbitrary upward monotone
families, even when the inner quantifier remains a filter.

The restriction to two filters has a sharp set-theoretic answer.  Under
$V=L$, and more generally when there is no inner model with a measurable
cardinal, dominance between filters forces the lower branching equality.
We derive this from Donder's regularity theorem \cite{Donder1988}, in the
form recorded in \cite[Theorem~8.1]{Usuba2026}, together with the chain
alternatives developed below.  Conversely, a measurable cardinal yields
two non-maximal filters for which dominance holds but both branching
equalities fail.  The existence of a two-filter counterexample is therefore
equiconsistent with a measurable cardinal.  Unlike the unrestricted case,
the two-filter case is independent of ZFC, relative to that consistency
assumption.

The countable characterization is due to Altman, Peterzil, and
Winter \cite{AltmanPeterzilWinter2005}. 
In Theorem 5 of that paper, the mixed sufficiency argument invokes
finite-intersection closure of $Q_1$, although the hypothesis concerns $Q_1^d$. 
We give a corrected proof that exposes the two branching
alternatives and extends its main combinatorial ingredients to arbitrary
cardinalities. These extensions also explain where the countable argument
stops: at each regular cardinal dominance supplies a disjunction, whereas
the smallness argument needs a uniform choice across all relevant
cardinals. The counterexample makes a related obstruction
explicit: unions of outer quantifiers preserve dominance, even when their
components use different branching alternatives.

Section~\ref{sec:iteration} introduces iteration, duality, and the branching
product. Section~\ref{sec:goldberg} treats ultrafilters and Goldberg's
theorem.  Section~\ref{sec:completeness} develops the completeness machinery and the positive
inner-cardinality case.  The necessary conditions in Section~\ref{sec:dominance} 
are used in Section~\ref{sec:countable} to recover the
countable characterization.  Section~\ref{sec:counterexample} gives the ZFC
counterexample and explains how mixing the two branching mechanisms defeats
the unrestricted dichotomy. Section~\ref{sec:filters} establishes the
positive theorem under $V=L$, the measurable-cardinal counterexample for two
filters, and the resulting relative independence and equiconsistency.

We work in ZFC throughout, with additional hypotheses indicated explicitly.
All quantifiers below are local (they live on one fixed nonempty domain) and
upward monotone.  Unless explicitly stated otherwise, they are also
\emph{proper}: $\varnothing\notin Q$ and $\D\in Q$.  

Parts of this paper were developed several years ago. More recently, the author
used ChatGPT (OpenAI) to revisit and extend these investigations, including
developing the counterexamples and the two-filter results. A more detailed account
of this assistance appears in the AI declaration.

\section{Iteration, duality, and branching products}\label{sec:iteration}

For a relation $R\subseteq\D^2$, put
\[
 R_x=\set{y\in\D:(x,y)\in R},\qquad
 R^y=\set{x\in\D:(x,y)\in R},
\]
and let $R^{-1}=\set{(y,x):(x,y)\in R}$.

\begin{definition}
For quantifiers $P,Q\subseteq\Pow(\D)$, their \emph{iteration} is
\[
 P\cdot Q
 =\set{R\subseteq\D^2:\set{x:R_x\in Q}\in P}.
\]
We say that $P$ \emph{dominates} $Q$, and write $P\dom Q$, if
\[
 R\in P\cdot Q\quad\Longrightarrow\quad
 R^{-1}\in Q\cdot P
\]
for every $R\subseteq\D^2$.
\end{definition}

The preceding semantic reading has the following set-theoretic form.  The
branching product is the least upward monotone binary quantifier containing
all Cartesian products of a $P$-large set and a $Q$-large set.

\begin{definition}
Define
\[
 P\branch Q
 =\set{R\subseteq\D^2:(\exists A\in P)(\exists B\in Q)\,
 A\times B\subseteq R}.
\]
The dual of $Q$ is
\[
 Q^d=\set{A\subseteq\D:\D\setminus A\notin Q}.
\]
For a binary quantifier $\mathcal Q\subseteq\Pow(\D^2)$, its dual is
defined using complementation in $\D^2$.
\end{definition}

The following criterion will be used repeatedly. 

\begin{proposition}\label{prop:branching}
The following are equivalent.
\begin{enumerate}
\item $P\cdot Q=P\branch Q$.
\item Whenever $A\in P$ and $A_x\in Q$ for every $x\in A$, there is
      $C\in P$ with $C\subseteq A$ such that
      $\bigcap_{x\in C}A_x\in Q$.
\end{enumerate}
\end{proposition}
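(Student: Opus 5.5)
The first step is to record the inclusion $P\branch Q\subseteq P\cdot Q$, which holds for all upward monotone $P,Q$. Suppose $A\times B\subseteq R$ with $A\in P$ and $B\in Q$. Then for every $x\in A$ we have $B\subseteq R_x$, so $R_x\in Q$ by upward monotonicity of $Q$. Hence $A\subseteq\set{x:R_x\in Q}$, and upward monotonicity of $P$ gives $R\in P\cdot Q$. With this in hand, (1) reduces to the reverse inclusion $P\cdot Q\subseteq P\branch Q$, and I will show that this reverse inclusion is equivalent to (2).

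For (1)$\Rightarrow$(2), fix $A\in P$ such that $A_x\in Q$ for every $x\in A$. Let $R=\set{(x,y):x\in A,\ y\in A_x}$, which is the restriction of $A$'s sections to rows in $A$. Then $\set{x:R_x\in Q}\supseteq A$, so $R\in P\cdot Q$, and by (1) $R\in P\branch Q$. Pick $C\in P$ and $B\in Q$ with $C\times B\subseteq R$. Every $(x,y)\in C\times B$ lies in $R$, so $C\subseteq A$ whenever $B\neq\varnothing$. Properness of $Q$ guarantees $B\neq\varnothing$, since $\varnothing\notin Q$. Moreover $B\subseteq R_x=A_x$ for each $x\in C$, so $B\subseteq\bigcap_{x\in C}A_x$, and this intersection lies in $Q$ by monotonicity.

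For (2)$\Rightarrow$(1), take $R\in P\cdot Q$ and set $A=\set{x:R_x\in Q}\in P$. The hypothesis of (2) holds for this $A$ with the sections $R_x$ playing the role of $A_x$. Condition (2) then supplies $C\in P$ with $C\subseteq A$ and $B:=\bigcap_{x\in C}R_x\in Q$. Now $C\times B\subseteq R$, so $R\in P\branch Q$.

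There is no real obstacle here. The only delicate point is reading (2) correctly: the notation $A_x$ should be understood as a family of sets indexed by $x\in A$ (equivalently, as the sections of a relation), not as a single set. The other point needing care is the use of properness to force $B\neq\varnothing$, which is what yields $C\subseteq A$ in the first direction.
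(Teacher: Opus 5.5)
Your proof is correct and matches the paper's argument: you use the same relation $\bigcup_{x\in A}(\{x\}\times A_x)$, with properness of $Q$ forcing $C\subseteq A$, for (1)$\Rightarrow$(2), and you apply (2) to the rows of $R$ over $A=\set{x:R_x\in Q}$ for the converse. You also verify the inclusion $P\branch Q\subseteq P\cdot Q$ explicitly, which the paper leaves implicit here and records only as immediate in Proposition~\ref{prop:duality}.
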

\begin{proof}
Assume the equality and let
$R=\bigcup_{x\in A}(\{x\}\times A_x)$.  Then
$R\in P\cdot Q$, so some $C\in P$ and $B\in Q$ satisfy
$C\times B\subseteq R$.  Properness of $Q$ implies $C\subseteq A$, and
$B\subseteq\bigcap_{x\in C}A_x$.  Upward monotonicity gives the desired
intersection.

Conversely, if $R\in P\cdot Q$, let
$A=\set{x:R_x\in Q}\in P$ and apply (2) to $(R_x)_{x\in A}$.
If $C$ is supplied by (2) and $B=\bigcap_{x\in C}R_x$, then
$C\times B\subseteq R$.
\end{proof}

\begin{proposition}\label{prop:duality}
For upward monotone quantifiers $P,Q$,
\begin{align*}
 (P\cdot Q)^d&=P^d\cdot Q^d,\\
 P\branch Q&\subseteq P\cdot Q
 \subseteq (P^d\branch Q^d)^d.
\end{align*}
Moreover,
\[
 P\dom Q\iff Q^d\dom P^d.
\]
\end{proposition}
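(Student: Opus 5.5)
The plan is to unwind the definitions of iteration and duality and prove the three claims in order, each feeding the next. Throughout I use that $(Q^d)^d=Q$ and that duality reverses inclusion, both for unary quantifiers and for binary quantifiers on $\D^2$.

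\textbf{The duality identity.} For $R\subseteq\D^2$, I compute membership of $R$ in $(P\cdot Q)^d$. By definition, $R\in(P\cdot Q)^d$ iff $\D^2\setminus R\notin P\cdot Q$. That is, iff $\set{x:(\D^2\setminus R)_x\in Q}\notin P$. Since $(\D^2\setminus R)_x=\D\setminus R_x$, the set $\set{x:\D\setminus R_x\in Q}$ is the complement of $\set{x:R_x\in Q^d}$. So the condition becomes: the complement of $\set{x:R_x\in Q^d}$ is not in $P$, i.e.\ $\set{x:R_x\in Q^d}\in P^d$. This says exactly $R\in P^d\cdot Q^d$. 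Monotonicity is not even needed here.

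\textbf{The sandwich.} For the left inclusion, suppose $A\times B\subseteq R$ with $A\in P$ and $B\in Q$. Then $R_x\supseteq B$ for every $x\in A$, so $R_x\in Q$ by upward monotonicity. Hence $\set{x:R_x\in Q}\supseteq A$ is in $P$, and $R\in P\cdot Q$. Applying this inclusion to the duals gives $P^d\branch Q^d\subseteq P^d\cdot Q^d$. The duals are again upward monotone, and properness is not used in this step. Dualizing reverses inclusion, so
\[
(P^d\cdot Q^d)^d\subseteq(P^d\branch Q^d)^d .
\]
By the identity above together with $(\mathcal Q^d)^d=\mathcal Q$, the left-hand side equals $P\cdot Q$.

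\textbf{Dominance.} First observe that transposition commutes with binary duality, because $(\D^2\setminus R)^{-1}=\D^2\setminus R^{-1}$. Now assume $P\dom Q$, and let $R\in Q^d\cdot P^d$; we must show $R^{-1}\in P^d\cdot Q^d$. Suppose not. Then by the identity, $\D^2\setminus R^{-1}\in P\cdot Q$. Dominance yields $(\D^2\setminus R^{-1})^{-1}=\D^2\setminus R\in Q\cdot P$. Again by the identity, this means $R\notin Q^d\cdot P^d$, a contradiction. So $Q^d\dom P^d$. The converse follows by applying this implication to the pair $(Q^d,P^d)$ and using $P^{dd}=P$, $Q^{dd}=Q$.

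The only delicate point is the bookkeeping of complements, namely the two facts $(\D^2\setminus R)_x=\D\setminus R_x$ and $(\D^2\setminus R)^{-1}=\D^2\setminus R^{-1}$. Everything else is direct.
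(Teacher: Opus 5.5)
Your proof is correct and follows the paper's argument essentially step for step. You use the same complement computation for $(P\cdot Q)^d=P^d\cdot Q^d$, the same ``apply the left inclusion to the duals and dualize'' step for the right inclusion, and the same contrapositive use of dominance via $(R^{-1})^c=(R^c)^{-1}$, with your side remarks that the identity needs no monotonicity and the sandwich needs no properness being accurate.
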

\begin{proof}
Writing $R^c=\D^2\setminus R$, we have
\begin{align*}
R\in(P\cdot Q)^d
&\iff
 \set{x:(R^c)_x\in Q}\notin P\\
&\iff 
 \set{x:R_x\in Q^d}\in P^d.
\end{align*}
This proves the identity.  The left inclusion is immediate; applying it to
the duals and taking duals gives the right inclusion.  Finally, if
$P\dom Q$ and $R\in Q^d\cdot P^d=(Q\cdot P)^d$, then
$R^c\notin Q\cdot P$.  Hence $(R^{-1})^c\notin P\cdot Q$, so
$R^{-1}\in P^d\cdot Q^d$.  Apply the same argument to the duals for the
converse.
\end{proof}

Thus iteration has a least possible value, generated by rectangles, and a
greatest possible value, obtained by dualizing the branching product of
the dual quantifiers. Both bounds are attainable: $\exists\cdot\forall$
attains the lower bound, while $\forall\cdot\exists$ attains the upper bound.

\begin{question}\label{con:main}
If $P\dom Q$, must at least one of the following hold?
\begin{align}
 P\cdot Q&=P\branch Q,\label{eq:lower}\tag{L}\\
 Q^d\cdot P^d&=Q^d\branch P^d.\label{eq:upper}\tag{U}
\end{align}
Equivalently, in \eqref{eq:upper} one may write
$Q\cdot P=(Q^d\branch P^d)^d$.
\end{question}

Either equality is sufficient for dominance. To see this, suppose first
that \eqref{eq:lower} holds, and let $R\in P\cdot Q$. By the equality,
there are $A\in P$ and $B\in Q$ such that $A\times B\subseteq R$.
Transposing gives $B\times A\subseteq R^{-1}$. In particular, for every
$y\in B$ we have $A\subseteq R^y$, so upward monotonicity of $P$ gives
$R^y\in P$. Consequently,
\[
 B\subseteq\{y\in\D:R^y\in P\}.
\]
Since $B\in Q$, upward monotonicity of $Q$ implies that the set on the
right belongs to $Q$. Thus $R^{-1}\in Q\cdot P$, proving $P\dom Q$.

If \eqref{eq:upper} holds, applying the preceding argument to $Q^d$ and $P^d$ gives $Q^d \dom P^d$, and hence $P \dom Q$ by Proposition \ref{prop:duality}.

The issue is therefore necessity: must every instance of dominance arise
from at least one of these two branching equalities? The positive cases
below motivate the question, while Theorem~\ref{thm:counterexample} shows
that, in full generality, dominance can hold while both equalities fail.

\section{Branching products of ultrafilters}\label{sec:goldberg}

The preceding terminology has a standard filter-theoretic counterpart.  Here
a filter is proper, upward monotone, and closed under finite intersections.  If
$U$ and $W$ are filters, their Cartesian product $U\times W$ is the filter
generated by the rectangles $A\times B$, where $A\in U$ and $B\in W$.
Since a finite intersection of such rectangles is again such a rectangle,
as families of subsets of $\D^2$ we have
\[
 U\times W=U\branch W.
\]
Their left and right tensor products, in Goldberg's notation, are
\begin{align*}
 U\ltimes W&=\{R:\{x:R_x\in W\}\in U\}=U\cdot W,\\
 U\rtimes W&=\{R:\{y:R^y\in U\}\in W\}.
\end{align*}
Thus $U\rtimes W$ is the transpose of $W\cdot U$.

Following Goldberg \cite{Goldberg2025}, say that $W$ is \emph{$U$-closed} if
for every family $(B_x)_{x\in\D}$ of members of $W$, there is
$A\in U$ such that $\bigcap_{x\in A}B_x\in W$.
Proposition~\ref{prop:branching}, specialized
to filters, is precisely the equivalence
\[
 W\text{ is }U\text{-closed}
\iff 
 U\ltimes W=U\times W.
\]
This is also \cite[Lemma~2.4]{Goldberg2025}.

For ultrafilters, both tensor products are ultrafilters on $\D^2$.  Hence
\[
 U\dom W
 \iff 
 U\ltimes W\subseteq U\rtimes W
 \iff
 U\ltimes W=U\rtimes W.
\]

Consequently, Question~\ref{con:main} asks, in the ultrafilter case,
whether tensor commutativity forces equality with the Cartesian
product. Under dominance, self-duality and transposition make
the two alternatives \eqref{eq:lower} and \eqref{eq:upper} equivalent.

To isolate the role of GCH, let $\lambda\leq\eta$ be infinite cardinals.
An ultrafilter $U$ is \emph{$(\lambda,\eta)$-indecomposable} if every
family $\mathcal A$ with $|\mathcal A|\leq\eta$ and
$\bigcup\mathcal A\in U$ has a subfamily of cardinality less than
$\lambda$ whose union belongs to $U$.

\begin{theorem}[Goldberg \cite{Goldberg2025}]\label{thm:goldberg}
Let $U,W$ be ultrafilters.
\begin{enumerate}
\item If $U\ltimes W=U\rtimes W$, then for every infinite cardinal $\lambda$, one
of $U,W$ is $(\lambda,\lambda^+)$-indecomposable.
\item If for every infinite cardinal $\lambda$, one of $U,W$ is
$(\lambda,2^\lambda)$-indecomposable, then $U\times W$ is an ultrafilter.
\end{enumerate}
Consequently, under GCH, tensor commutativity is equivalent to
$U\ltimes W=U\times W=U\rtimes W$.
\end{theorem}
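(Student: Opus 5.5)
The plan is to prove the two parts separately and then combine them using GCH.

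\emph{Part (1).} I argue by contraposition. Suppose that for some infinite $\lambda$ neither $U$ nor $W$ is $(\lambda,\lambda^+)$-indecomposable. Then each has a witnessing decomposition. Fix a partition of $\D$ into at most $\lambda^+$ pieces such that no union of fewer than $\lambda$ pieces lies in $U$, and the analogous partition for $W$. These give functions $f\colon\D\to\lambda^+$ and $g\colon\D\to\lambda^+$. Pushing forward, the ultrafilters $f_*U$ and $g_*W$ on $\lambda^+$ have the property that every set of size $<\lambda$ is small. I would first normalize them, replacing $f$ by a suitable function so that $f_*U$ is either uniform on some regular $\mu\in\{\lambda,\lambda^+\}$ or concentrates appropriately. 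Since pushforward along $f\times g$ preserves both tensor products, commutativity descends to $f_*U$ and $g_*W$.

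Next I consider the relation $R=\{(x,y): f(x)<g(y)\}$ and its variants. For ultrafilters that are uniform on the same regular cardinal $\mu$, the set $\{x: f(x)<g(y)\}$ is $U$-small for every fixed $y$, so $R\notin U\rtimes W$. On the other hand, for fixed $x$ the set $\{y: g(y)>f(x)\}$ is $W$-large, so $R\in U\ltimes W$, and the tensor products do not commute.

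The delicate case is when the two pushforwards live on different cardinals, or when one of them is only $(\lambda,\lambda^+)$-decomposable via cofinality $\lambda$. There I would use a scale or almost-disjoint coding on $\lambda^+$: each $\alpha<\lambda^+$ is mapped injectively into $\lambda$ by $e_\alpha$, and the comparison is made via $e_{g(y)}(f(x))$ versus $h(x)$. This step, reducing to a single comparison relation that separates the two tensors, is the main obstacle. I expect to follow Goldberg's combinatorics rather than rediscover it.

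\emph{Part (2).} By Proposition~\ref{prop:branching} it suffices to show that $W$ is $U$-closed or $U$ is $W$-closed. That makes $U\ltimes W=U\times W$, which is an ultrafilter, and symmetrically the other product. Given a family $(B_x)$ with $B_x\in W$, I code it as a map $x\mapsto B_x\in\Pow(\D)$ and proceed by induction on cardinals $\lambda$ up to $|\D|$. If $U$ is $(\lambda,2^\lambda)$-indecomposable, then $U$ concentrates on fewer than $\lambda$ values of the map $x\mapsto B_x\cap S$, where $S$ ranges over pieces of size $\lambda$. Otherwise $W$ is indecomposable, and I swap roles. Assembling these stages gives a $U$-large set $A$ on which only $<\lambda$ distinct sets occur. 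The intersection of those sets is then computed by induction, using the indecomposability at smaller cardinals. This shows the Cartesian product filter contains $R$ or its complement for every $R$. I would cite \cite{Goldberg2025} for the bookkeeping.

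\emph{Final consequence.} Under GCH, $2^\lambda=\lambda^+$, so the hypothesis of (2) is exactly the conclusion of (1). Combining the two parts, commutativity implies that $U\times W$ is an ultrafilter. Since $U\times W\subseteq U\ltimes W$ and both are ultrafilters, they are equal, and likewise $U\times W=U\rtimes W$. The converse direction is immediate.
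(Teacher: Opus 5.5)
Your closing argument is exactly the paper's. Under GCH the conclusion of (1) is the hypothesis of (2), so $U\times W$ is an ultrafilter; since it is contained in both tensor products, which are themselves ultrafilters, it equals both, and the converse is trivial. For (1) and (2) the paper simply cites Goldberg's Theorems~2.15 and~2.14, and your proposal in effect does the same: the mixed $\lambda$ versus $\lambda^+$ and singular cases of (1), and the bookkeeping of (2), are explicitly deferred to Goldberg, and only the same-regular-cardinal case of (1) is actually argued.
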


\begin{proof}
The two assertions are Theorems~2.15 and 2.14 of
\cite{Goldberg2025}.  Under GCH, $2^\lambda=\lambda^+$, so the necessary
condition in (1) supplies the sufficient condition in (2).  Since
$U\times W$ is contained in both tensor products, its being an ultrafilter
forces equality with both.
\end{proof}

\begin{corollary}[GCH]
\label{cor:goldberg-quantifiers}
Assume GCH and let $U,W$ be ultrafilters on $\D$, regarded as upward
monotone generalized quantifiers.  Then
\[
 U\dom W\iff U\cdot W=U\branch W.
\]
When these conditions hold, also $W\cdot U=W\branch U$.
\end{corollary}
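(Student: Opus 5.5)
The plan is to translate both sides of the equivalence into Goldberg's filter-theoretic notation and then apply Theorem~\ref{thm:goldberg}. For ultrafilters $U,W$ on $\D$, Section~\ref{sec:goldberg} already records two identifications. First, $U\cdot W=U\ltimes W$ and $U\branch W=U\times W$. Second, $U\dom W$ holds if and only if $U\ltimes W=U\rtimes W$. The second uses that both tensor products are ultrafilters on $\D^2$, so inclusion between them is equality. Note that $R^{-1}\in W\cdot U$ says exactly that $R\in U\rtimes W$.

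For the forward direction, assume $U\dom W$, so $U\ltimes W=U\rtimes W$. Under GCH, Theorem~\ref{thm:goldberg} gives $U\ltimes W=U\times W=U\rtimes W$. The left equality reads $U\cdot W=U\branch W$.

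For the backward direction, assume $U\cdot W=U\branch W$. This is \eqref{eq:lower}, and the argument after Question~\ref{con:main} shows that \eqref{eq:lower} alone implies $P\dom Q$ for any upward monotone $P,Q$. Hence $U\dom W$, with no need for GCH here.

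It remains to show $W\cdot U=W\branch U$ when these conditions hold. From the forward direction we have $U\times W=U\rtimes W$. The set $U\rtimes W$ is the transpose of $W\cdot U=W\ltimes U$. The Cartesian product is symmetric under transposition: $R\in U\branch W$ if and only if $R^{-1}\in W\branch U$. Transposing the equality $U\times W=U\rtimes W$ therefore gives $W\branch U=W\cdot U$.

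The only delicate point is bookkeeping of transposes and of which tensor product corresponds to which iteration. All the substantive content is in Goldberg's theorem, which may be assumed.
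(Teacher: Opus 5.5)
Your proof is correct and follows the paper's argument: you translate dominance into $U\ltimes W=U\rtimes W$ using maximality of the tensor products, apply Goldberg's theorem under GCH, and transpose $U\times W=U\rtimes W$ to get the final assertion. The only difference is in the converse, where you cite the general fact that \eqref{eq:lower} implies dominance, while the paper observes that $U\times W=U\ltimes W$ is then an ultrafilter and hence equals $U\rtimes W$; both steps are immediate and neither needs GCH.
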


\begin{proof}
The equivalence follows from Theorem~\ref{thm:goldberg} and the
identifications above: equality $U\cdot W=U\branch W$ makes
$U\times W$ an ultrafilter, hence equal to both tensor products.
Transposing $U\rtimes W=U\times W$ gives the final assertion.
\end{proof}

Thus GCH gives a positive answer to Question~\ref{con:main} for
ultrafilters. For general upward monotone quantifiers, the iterates
need not be ultrafilters, so the maximality argument above is
unavailable.

\section{Completeness and smallness}\label{sec:completeness}

We separate ordinary completeness from chain completeness.  This also avoids
the ambiguity that ``$\kappa$-complete'' sometimes means closure under
$\kappa$ intersections and sometimes closure under fewer than $\kappa$
intersections.

\begin{definition}
Let $\kappa$ be an infinite cardinal.
\begin{enumerate}
\item $Q$ is a \emph{filter} if it is proper, upward monotone, and closed
      under finite intersections.
\item $Q$ is \emph{$\kappa$-complete} if every intersection of fewer than
      $\kappa$ members of $Q$ belongs to $Q$.
\item $Q$ is \emph{$\kappa$-chain-complete} if
      $\bigcap_{\alpha<\kappa}X_\alpha\in Q$ for every descending chain
      $(X_\alpha)_{\alpha<\kappa}$ in $Q$.
\item $Q$ is \emph{$\kappa$-small} if every $A\in Q$ has a subset
      $B\in Q$ with $|B|<\kappa$.
\end{enumerate}
\end{definition}

Chain completeness depends only on cofinality: replacing an ordinal by a
cofinal subsequence does not change the intersection.  It is therefore enough
to record it at regular cardinals.

\begin{lemma}
\label{lem:chain-to-complete}
Let $F$ be a filter.  If $F$ is $\lambda$-chain-complete for every infinite
regular $\lambda<\kappa$, then $F$ is $\kappa$-complete.
\end{lemma}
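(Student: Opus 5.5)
We show by induction on $\mu<\kappa$ that every family $\{X_\alpha:\alpha<\mu\}\subseteq F$ has $\bigcap_{\alpha<\mu}X_\alpha\in F$. This gives $\kappa$-completeness, since an intersection of fewer than $\kappa$ members can be indexed by some cardinal $\mu<\kappa$. The finite case is just closure of the filter $F$ under finite intersections, and the case $\mu=0$ is trivial because $\D\in F$. So fix an infinite cardinal $\mu<\kappa$, assume the claim for all smaller cardinals, and let $(X_\alpha)_{\alpha<\mu}$ be members of $F$.

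The key step converts the arbitrary family into a descending chain. For $\beta<\mu$ put $Y_\beta=\bigcap_{\alpha<\beta}X_\alpha$. Each $\beta<\mu$ has $|\beta|<\mu$, so $Y_\beta\in F$ by the induction hypothesis, and $(Y_\beta)_{\beta<\mu}$ is descending. Moreover
\[
 \bigcap_{\beta<\mu}Y_\beta=\bigcap_{\alpha<\mu}X_\alpha,
\]
because $\mu$ is a limit ordinal and every $\alpha<\mu$ lies below $\alpha+1<\mu$. It therefore suffices to show that the intersection of a descending $\mu$-chain in $F$ lies in $F$.

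Let $\lambda=\operatorname{cf}(\mu)$. Then $\lambda$ is infinite and regular with $\lambda\le\mu<\kappa$, so $F$ is $\lambda$-chain-complete by hypothesis. Choose a strictly increasing cofinal sequence $(\beta_\xi)_{\xi<\lambda}$ in $\mu$. The subchain $(Y_{\beta_\xi})_{\xi<\lambda}$ is descending and lies in $F$. Since the chain is descending and the sequence is cofinal, it has the same intersection as the full chain; this is the observation recorded just before the lemma. Hence $\bigcap_{\alpha<\mu}X_\alpha\in F$.

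There is no real obstacle here. The only points needing care are that the induction hypothesis is applied to cardinals strictly below $\mu$, which is what makes each $Y_\beta$ lie in $F$, and that the cofinality step correctly uses $\operatorname{cf}(\mu)<\kappa$, which holds because $\operatorname{cf}(\mu)\le\mu<\kappa$. Note that the argument uses only intersections of fewer than $\kappa$ sets, so no hypothesis at $\kappa$ itself is needed.
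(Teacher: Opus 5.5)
Your proof is correct and is essentially the paper's argument: both form the partial intersections $Y_\beta=\bigcap_{\alpha<\beta}X_\alpha$ and apply chain completeness at the cofinality, using a cofinal subsequence to keep the same intersection. The only difference is bookkeeping: the paper runs a single transfinite recursion along the ordinals $\alpha\le\mu$ (finite intersections at successors, chain completeness at $\operatorname{cf}(\alpha)$ at limits), whereas you induct on the cardinal $\mu$ and invoke chain completeness only at $\operatorname{cf}(\mu)$ in the top step.
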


\begin{proof}
Let $(X_\alpha)_{\alpha<\mu}$ be a family in $F$, where $\mu<\kappa$.
Inductively put $Y_\alpha=\bigcap_{\beta<\alpha}X_\beta$.  Successor steps
use finite-intersection closure.  At a limit stage, pass to a cofinal
subsequence and use chain completeness at the regular cardinal
$\operatorname{cf}(\alpha)$.  The same argument at the end gives
$\bigcap_{\alpha<\mu}X_\alpha\in F$.
\end{proof}

\begin{lemma}\label{lem:smallness}
Let $\lambda$ be an infinite cardinal.  If $Q^d$ is
$\operatorname{cf}(\lambda)$-chain-complete and $A\in Q$ has cardinality
$\lambda$, then $A$ has a subset $B\in Q$ with $|B|<\lambda$.
Consequently, if $Q^d$ is $\operatorname{cf}(\lambda)$-chain-complete for
every cardinal $\lambda$ with $\kappa\leq\lambda\leq|\D|$, then $Q$ is
$\kappa$-small.
\end{lemma}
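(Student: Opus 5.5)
We argue by contradiction. Fix $A\in Q$ with $|A|=\lambda$ and suppose no subset of $A$ of cardinality $<\lambda$ belongs to $Q$. Put $\mu=\operatorname{cf}(\lambda)$ and write $A=\bigcup_{\alpha<\mu}A_\alpha$ as an increasing union of subsets with $|A_\alpha|<\lambda$. If $\lambda$ is regular we take $A_\alpha$ to be the initial segments of an enumeration of $A$ in order type $\lambda$. If $\lambda$ is singular we fix a cofinal sequence $(\lambda_\alpha)_{\alpha<\mu}$ in $\lambda$ and let $A_\alpha$ be the first $\lambda_\alpha$ elements of such an enumeration.

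Set $X_\alpha=\D\setminus A_\alpha$. Since $A_\alpha\subseteq A$ has cardinality $<\lambda$, the standing assumption gives $A_\alpha\notin Q$. By the definition of the dual this means $\D\setminus A_\alpha\in Q^d$, so $X_\alpha\in Q^d$. The sequence $(X_\alpha)_{\alpha<\mu}$ is descending, so $\operatorname{cf}(\lambda)$-chain-completeness of $Q^d$ yields
\[
\bigcap_{\alpha<\mu}X_\alpha=\D\setminus\bigcup_{\alpha<\mu}A_\alpha=\D\setminus A\in Q^d.
\]
Unwinding the dual once more, this says $A\notin Q$, which contradicts $A\in Q$. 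Hence some $B\subseteq A$ with $B\in Q$ has $|B|<\lambda$. The only point requiring care is that the decomposition must have length exactly $\operatorname{cf}(\lambda)$, with pieces of size $<\lambda$; the cofinal sequence handles this even when $\lambda$ is singular. Properness is not used in this step.

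For the consequence, let $A\in Q$ be arbitrary. If $|A|<\kappa$ there is nothing to do. Otherwise, choose $B\subseteq A$ with $B\in Q$ of minimal cardinality $\lambda$; such a $B$ exists because cardinals are well-ordered. Suppose $\lambda\ge\kappa$. Then $\lambda\le|A|\le|\D|$, so $B$ is infinite, and the hypothesis says $Q^d$ is $\operatorname{cf}(\lambda)$-chain-complete. The first part then produces a subset of $B$ that lies in $Q$ and has cardinality $<\lambda$, contradicting minimality. Therefore $\lambda<\kappa$, and $Q$ is $\kappa$-small.
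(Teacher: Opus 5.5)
Your proof is correct and is essentially the paper's argument: the paper also enumerates $A$ in order type $\lambda$, puts the complements of the initial segments into $Q^d$, and passes to a cofinal subsequence of length $\operatorname{cf}(\lambda)$ (your explicit cofinal sequence in the singular case). Your choice of $B\subseteq A$ in $Q$ of minimal cardinality is a tidier form of the paper's ``repeatedly replace by a strictly smaller member'' step; one small wording fix is that $B$ is infinite because $\lambda\geq\kappa$ with $\kappa$ infinite, not because $\lambda\leq|\D|$.
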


\begin{proof}
Suppose that $A\in Q$, $|A|=\lambda$, and no smaller subset of $A$ belongs
to $Q$.  Enumerate $A=\set{a_\alpha:\alpha<\lambda}$ and put
$A_\alpha=\set{a_\beta:\beta<\alpha}$.  Then $A_\alpha\notin Q$, so
$A_\alpha^c\in Q^d$.  A cofinal subsequence of these complements is a
descending $\operatorname{cf}(\lambda)$-chain whose intersection is $A^c$.
Chain completeness would give $A^c\in Q^d$, contrary to $A\in Q$.

For the last assertion, repeatedly replace a member of $Q$ of cardinality at
least $\kappa$ by a strictly smaller member.  This process terminates because
the cardinals are well ordered.
\end{proof}

On a countable domain, the case $\lambda=\omega$, applied to $Q^d$, is the
implication from the descending chain condition (DCC) to the finite-witness
property (FIN) in Fact~4 of Altman, Peterzil, and Winter
\cite[Fact~4]{AltmanPeterzilWinter2005}.  They also prove the converse.  The
cardinal formulation above isolates the direction that will be used here.

\begin{lemma}\label{lem:small-complete}
Suppose that $P$ is $\kappa$-small and $Q$ is $\kappa$-complete.
Then $P\cdot Q=P\branch Q$.  In particular, $P\dom Q$.
\end{lemma}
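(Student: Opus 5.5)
We verify criterion (2) of Proposition~\ref{prop:branching}, which gives $P\cdot Q=P\branch Q$. Dominance then follows from the argument given after Question~\ref{con:main}: equality \eqref{eq:lower} is sufficient for $P\dom Q$.

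Let $A\in P$, and suppose $A_x\in Q$ for every $x\in A$. Since $P$ is $\kappa$-small, there is $C\subseteq A$ with $C\in P$ and $|C|<\kappa$.

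The family $(A_x)_{x\in C}$ consists of fewer than $\kappa$ members of $Q$. By $\kappa$-completeness of $Q$, the intersection $\bigcap_{x\in C}A_x$ belongs to $Q$. This is exactly what clause (2) of Proposition~\ref{prop:branching} demands, so $P\cdot Q=P\branch Q$.

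One degenerate case needs a remark: $C$ could be empty. This cannot happen, because $P$ is proper and so $\varnothing\notin P$. Hence the intersection over $C$ is a genuine intersection of a nonempty family, and it is not the convention $\bigcap\varnothing=\D$. That convention would in any case be harmless, since $\D\in Q$.

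There is no real obstacle here. The only points to check are that smallness is applied to the given $A$, which yields $C\subseteq A$ as required, and that ``$\kappa$-complete'' means closure under intersections of fewer than $\kappa$ sets, which matches $|C|<\kappa$ exactly.
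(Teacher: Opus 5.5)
Your proof is correct and is essentially the paper's argument. The paper works directly with $R\in P\cdot Q$. It takes $A=\set{x:R_x\in Q}\in P$, picks a $P$-large $B\subseteq A$ with $|B|<\kappa$, and observes that $B\times\bigcap_{x\in B}R_x\subseteq R$. This is just the converse half of Proposition~\ref{prop:branching} written out inline, where you cite it instead.
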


\begin{proof}
If $R\in P\cdot Q$, let $A=\set{x:R_x\in Q}\in P$.  Choose
$B\in P$ with $B\subseteq A$ and $|B|<\kappa$.  Then
$C=\bigcap_{x\in B}R_x\in Q$ and $B\times C\subseteq R$.
\end{proof}

\begin{proposition}\label{prop:minimal}
Suppose $A$ is an inclusion-minimal member of $P$.  If $P\dom Q$, then $Q$ is
closed under intersections of $|A|$ members.  In particular, for
$\forall_A=\set{B\subseteq\D:A\subseteq B}$ we have
$\forall_A\dom Q$ if and only if $Q$ is closed under intersections of
$|A|$ members.
\end{proposition}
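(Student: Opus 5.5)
Given members $(B_x)_{x\in A}$ of $Q$, indexed by the minimal set $A$ (any family of $|A|$ members of $Q$ can be indexed this way), I will test dominance on the relation
\[
 R=\bigcup_{x\in A}\bigl(\{x\}\times B_x\bigr).
\]
Since $R_x=B_x\in Q$ for every $x\in A$, we get $A\subseteq\set{x:R_x\in Q}$, so $R\in P\cdot Q$ by upward monotonicity of $P$. Dominance then gives $R^{-1}\in Q\cdot P$, that is, $Y=\set{y:R^y\in P}\in Q$.

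The key step is to identify $Y$ with $\bigcap_{x\in A}B_x$. For any $y$, the column $R^y=\set{x\in A:y\in B_x}$ is a subset of $A$. By inclusion-minimality of $A$ in $P$, together with upward monotonicity, $R^y\in P$ holds only when $R^y=A$, i.e.\ when $y\in B_x$ for all $x\in A$. Conversely, if $y\in\bigcap_{x\in A}B_x$ then $R^y=A\in P$. Hence $Y=\bigcap_{x\in A}B_x$, and this intersection belongs to $Q$.

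For the $\forall_A$ statement, $A$ is the least member of $\forall_A$, so necessity follows from the first part. For sufficiency I will verify condition (2) of Proposition~\ref{prop:branching} for the pair $(\forall_A,Q)$ and then use the remark after Question~\ref{con:main} that \eqref{eq:lower} implies dominance. Suppose $A'\in\forall_A$ and $A'_x\in Q$ for all $x\in A'$. Take $C=A\subseteq A'$. Then $\bigcap_{x\in A}A'_x$ is an intersection of $|A|$ members of $Q$, so it lies in $Q$ by hypothesis.

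One degenerate case needs care. If $A=\varnothing$, then $\varnothing\in P$, which contradicts properness, so we may assume $A\neq\varnothing$. I expect no real obstacle; the only subtle point is using minimality to force each column $R^y$ to be either all of $A$ or outside $P$.
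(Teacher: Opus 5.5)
Your proof is correct and follows essentially the same route as the paper: you use the same relation (rows $B_x$ on $A$, empty rows elsewhere), minimality of $A$ to force a column into $P$ only when it equals $A$, and Proposition~\ref{prop:branching} with $C=A$ for the converse. Your explicit remark that $A\neq\varnothing$ follows from properness is a harmless detail that the paper leaves implicit.
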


\begin{proof}
Write $A=\set{a_\alpha:\alpha<|A|}$ and let $X_\alpha\in Q$.  Give
$a_\alpha$ the row $X_\alpha$ and every point outside $A$ the empty row.
The resulting relation belongs to $P\cdot Q$.  A column belongs to $P$
exactly when it contains all of $A$, by minimality.  Dominance therefore says
$\bigcap_{\alpha<|A|}X_\alpha\in Q$.  The converse for $\forall_A$ follows
from Proposition~\ref{prop:branching}.
\end{proof}

The dual of $\forall_A$ is
$\exists_A=\set{B\subseteq\D:A\cap B\ne\varnothing}$.  If the inner
quantifier is principal, i.e., of the form $\forall_A$ for nonempty $A$, or if the outer quantifier is of the form
$\exists_A$, the lower branching-product equality holds without any
completeness assumption.  Together with duality and
Proposition~\ref{prop:minimal}, this
shows that Question~\ref{con:main} has a positive answer whenever one of
$P,P^d,Q,Q^d$ is principal.

Cardinality quantifiers give a further positive case, with an essential
restriction on scope order. On an infinite domain $\D$, and for a cardinal
$1\leq\kappa\leq|\D|$, put
\[
 Q^{\geq\kappa}=\{A\subseteq\D:|A|\geq\kappa\}.
\]
Its dual is
\[
 (Q^{\geq\kappa})^d
 =\{A\subseteq\D:|\D\setminus A|<\kappa\}.
\]

\begin{proposition}
\label{prop:inner-cardinality}
Let $\D$ be infinite and $1\leq\kappa\leq|\D|$. For every proper upward
monotone quantifier $P$ on $\D$,
\[
 P\dom Q^{\geq\kappa}
 \iff
 P=\exists_S\text{ for some nonempty }S\subseteq\D.
\]
When these conditions hold,
\[
 P\cdot Q^{\geq\kappa}=P\branch Q^{\geq\kappa}.
\]
\end{proposition}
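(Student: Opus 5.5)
The backward direction is a direct check of \eqref{eq:lower}, after which dominance follows from the sufficiency argument given after Question~\ref{con:main}. For the forward direction I plan to build a single counterexample relation with pairwise disjoint rows. It shows that every member of $P$ contains a point whose singleton already lies in $P$.

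\emph{Sufficiency and the branching equality.} Suppose $P=\exists_S$ with $S\neq\varnothing$, and let $R\in P\cdot Q^{\geq\kappa}$. Then $\set{x:|R_x|\geq\kappa}$ meets $S$, so I can pick $x\in S$ with $|R_x|\geq\kappa$. Put $A=\set{x}$ and $B=R_x$. Then $A\in\exists_S$, $B\in Q^{\geq\kappa}$, and $A\times B\subseteq R$. Hence $P\cdot Q^{\geq\kappa}=P\branch Q^{\geq\kappa}$, since the reverse inclusion is Proposition~\ref{prop:duality}. This is \eqref{eq:lower}, and the argument following Question~\ref{con:main} then yields $P\dom Q^{\geq\kappa}$. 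It also gives the final displayed equality once the equivalence is established.

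\emph{Necessity.} Assume $P\dom Q^{\geq\kappa}$ and put $S=\set{a\in\D:\set{a}\in P}$. By upward monotonicity, $\exists_S\subseteq P$. The key claim is the converse: every $A\in P$ contains some $a$ with $\set{a}\in P$. Then $P\subseteq\exists_S$, and $S\neq\varnothing$ follows from $\D\in P$.

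To prove the claim, suppose $A\in P$ but $\set{a}\notin P$ for every $a\in A$. Since $P$ is proper, $A\neq\varnothing$. Because $\D$ is infinite, $|A|\leq|\D|$ and $\kappa\leq|\D|$, so $|A|\cdot\kappa\leq|\D|$. I can therefore choose pairwise disjoint sets $B_a\subseteq\D$ for $a\in A$, each of cardinality $\kappa$. Let $R=\bigcup_{a\in A}\set{a}\times B_a$, so that points outside $A$ get empty rows. Then $\set{x:R_x\in Q^{\geq\kappa}}\supseteq A\in P$, and hence $R\in P\cdot Q^{\geq\kappa}$. On the other hand, disjointness of the rows makes every column $R^y$ either empty or a singleton $\set{a}$ with $a\in A$. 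Neither kind of column belongs to $P$: the empty column by properness, the singleton by assumption. So $\set{y:R^y\in P}=\varnothing\notin Q^{\geq\kappa}$, which contradicts dominance.

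The only step that needs care is this construction of disjoint rows of size $\kappa$. It is exactly where infiniteness of $\D$ and the bound $\kappa\leq|\D|$ are used, via the cardinal arithmetic $|A|\cdot\kappa\leq|\D|\cdot|\D|=|\D|$. Everything else is a direct application of upward monotonicity and properness.
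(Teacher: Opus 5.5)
Your proof is correct and follows essentially the same route as the paper. For the converse you use the rectangle $\{x\}\times R_x$ with $x\in S$; for necessity you use a relation whose rows over $A$ are pairwise disjoint, so every column is empty or a singleton. The differences are cosmetic: the paper partitions $\D$ into pieces of size $|\D|$ and argues directly for every $A\in P$, whereas you take disjoint rows of size exactly $\kappa$ and argue by contradiction.
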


\begin{proof}
Suppose $P\dom Q^{\geq\kappa}$. Since $\D$ is infinite, fix a partition
$(B_x)_{x\in\D}$ of $\D$ with $|B_x|=|\D|$ for every $x\in\D$.
For $A\in P$, define
\[
 R=\bigcup_{x\in A}(\{x\}\times B_x).
\]
Then $R_x=B_x$ for $x\in A$ and $R_x=\varnothing$ otherwise. Hence
$\{x\in\D:R_x\in Q^{\geq\kappa}\}=A$, so
$R\in P\cdot Q^{\geq\kappa}$. Dominance gives
\[
 \{y\in\D:R^y\in P\}\in Q^{\geq\kappa}.
\]
Since $\kappa\geq1$, some column belongs to $P$. Every column is either
empty or a singleton $\{x\}$ with $x\in A$. Properness of $P$ therefore
gives
\[
 (\forall A\in P)(\exists x\in A)\ \{x\}\in P.
\]
Put $S=\{x\in\D:\{x\}\in P\}$. The preceding observation and upward
monotonicity give $P=\exists_S$, and $S\ne\varnothing$ because $\D\in P$.

Conversely, suppose $P=\exists_S$ for some nonempty $S\subseteq\D$,
and let $R\in P\cdot Q^{\geq\kappa}$. Choose $x\in S$ such that
$R_x\in Q^{\geq\kappa}$. Since $\{x\}\in P$, the rectangle
$\{x\}\times R_x\subseteq R$ witnesses $R\in P\branch Q^{\geq\kappa}$.
Thus
\[
 P\cdot Q^{\geq\kappa}\subseteq P\branch Q^{\geq\kappa}.
\]
The reverse inclusion always holds, so the lower branching equality
follows, and hence dominance.
\end{proof}

\begin{example}\label{ex:cardinality}
On an uncountable domain, $Q_1=Q^{\geq\aleph_1}$ means ``uncountably many''.
Proposition~\ref{prop:inner-cardinality} shows that
$P\dom Q_1$ forces $P=\exists_S$ for some nonempty
$S\subseteq\D$, and hence \eqref{eq:lower}. This gives a positive
instance of Question~\ref{con:main} on every uncountable domain.

There is also a dual positive instance. On any infinite domain $\D$,
for $1\leq\kappa\leq|\D|$ and any proper upward monotone quantifier $P$,
Proposition~\ref{prop:duality} and the preceding proposition give
\[
 (Q^{\geq\kappa})^d\dom P
 \iff
 P^d\dom Q^{\geq\kappa}
 \iff
 P=\forall_S\text{ for some nonempty }S\subseteq\D.
\]
Here too \eqref{eq:lower} holds, since the inner quantifier is principal.

The scope order is essential: Remark~\ref{rem:outer-cardinality}
shows that $Q^{\geq\aleph_1}$ in the outer position can dominate
another quantifier while both branching equalities fail.
\end{example}

\section{What dominance forces}\label{sec:dominance}

The main necessary condition is a simultaneous obstruction to failures of
intersection closure.  Its proof works at every cofinality.

\begin{theorem}\label{thm:alternatives}
Assume $P\dom Q$.  Then:
\begin{enumerate}
\item either $P^d$ or $Q$ is a filter;
\item for every infinite regular cardinal $\kappa$, either $P^d$ or $Q$
      is $\kappa$-chain-complete.
\end{enumerate}
\end{theorem}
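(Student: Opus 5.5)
Both parts go by contraposition: assuming that neither alternative holds, I construct a relation $R$ with $R\in P\cdot Q$ but $R^{-1}\notin Q\cdot P$. The guiding idea is to translate failure of the closure property for $P^d$ into a statement about $P$ by complementation. $P^d$ fails finite-intersection closure, or fails $\kappa$-chain-completeness, exactly when $P$ contains a union of finitely many, or of a $\kappa$-chain of, non-members. We then assign rows of $R$ so that every point of that union gets a $Q$-large row, while the columns are only ever non-members of $P$ except on the bad intersection of $Q$-sets.

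For (1), suppose neither $P^d$ nor $Q$ is a filter. Since both are proper and upward monotone, failure of the filter property for $P^d$ means there are $X_1,X_2\in P^d$ with $X_1\cap X_2\notin P^d$. Taking complements gives $A_1,A_2\notin P$ with $A_1\cup A_2\in P$. Replacing $A_2$ by $A_2\setminus A_1$, which is still not in $P$ by monotonicity, we may assume they are disjoint. Failure for $Q$ gives $B_1,B_2\in Q$ with $B_1\cap B_2\notin Q$. Put
\[
 R=(A_1\times B_1)\cup(A_2\times B_2).
\]
Every $x\in A_1\cup A_2$ has $R_x\in Q$, so $R\in P\cdot Q$. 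A column $R^y$ equals $A_1$, $A_2$, $A_1\cup A_2$ or $\varnothing$, according as $y\in B_1\setminus B_2$, $y\in B_2\setminus B_1$, $y\in B_1\cap B_2$, or none of these. Hence $\set{y:R^y\in P}=B_1\cap B_2\notin Q$, and dominance fails.

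For (2), fix a regular infinite $\kappa$ and suppose neither $P^d$ nor $Q$ is $\kappa$-chain-complete. Complementing a descending $\kappa$-chain in $P^d$ whose intersection is not in $P^d$ gives an ascending chain $(A_\alpha)_{\alpha<\kappa}$ with every $A_\alpha\notin P$ but $A=\bigcup_\alpha A_\alpha\in P$. From $Q$ we get a descending chain $(B_\alpha)_{\alpha<\kappa}$ in $Q$ with $\bigcap_\alpha B_\alpha\notin Q$. For $x\in A$ let $\rho(x)$ be the least $\alpha$ with $x\in A_{\alpha+1}$. Set $R_x=B_{\rho(x)}$ for $x\in A$ and $R_x=\varnothing$ otherwise; then $R\in P\cdot Q$.

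Now fix $y\notin\bigcap_\alpha B_\alpha$. Since the $B_\alpha$ descend, the set of $\alpha$ with $y\in B_\alpha$ is an initial segment bounded by some $\beta<\kappa$. Therefore
\[
 R^y\subseteq\set{x\in A:\rho(x)<\beta}\subseteq A_\beta\notin P,
\]
so $R^y\notin P$ by monotonicity. Thus $\set{y:R^y\in P}\subseteq\bigcap_\alpha B_\alpha\notin Q$, and dominance fails again.

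The only delicate step is the indexing in (2). The rank function must send each column outside $\bigcap_\alpha B_\alpha$ into a single $A_\beta$, and this requires choosing $\rho$ via successor stages $A_{\alpha+1}$ so that $\rho(x)<\beta$ forces $x\in A_\beta$. Regularity of $\kappa$ is used only to pass from an arbitrary chain to one of length exactly $\kappa$, as in the remark on cofinalities.
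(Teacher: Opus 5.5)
Your proof is correct and is essentially the paper's argument run contrapositively. Part (1) uses the same relation on disjoint non-members of $P$. Your relation in (2) is $\bigcup_{\alpha<\kappa}(A_{\alpha+1}\times B_\alpha)$, a one-step index shift of the paper's $\bigcup_{\alpha<\kappa}(X_\alpha^c\times Y_\alpha)$.

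The shift is harmless but not needed: if $\rho(x)$ is the least $\alpha$ with $x\in A_\alpha$, then $\rho(x)<\beta$ already gives $x\in A_\beta$. Also, your closing remark is off, since regularity of $\kappa$ is never used anywhere in the argument.
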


\begin{proof}
For (1), suppose $P^d$ is not closed under finite intersections. There are
disjoint $A,B\notin P$ such that $A\cup B\in P$. Indeed, starting with
$A_0,B_0\notin P$ and $A_0\cup B_0\in P$, replace them by
$A_0\setminus B_0$ and $B_0$; upward monotonicity shows that both remain
outside $P$. Given $C,E\in Q$, put
\[
 R=(A\times C)\cup(B\times E).
\]
Then $R\in P\cdot Q$. For $y\in\D$, the column $R^y$ belongs to $P$
exactly when $y\in C\cap E$: in that case $R^y=A\cup B\in P$, while
otherwise it is one of $A,B,\varnothing$, all outside $P$. Dominance
therefore gives $C\cap E\in Q$. Thus $Q$ is a filter.

For (2), suppose $(X_\alpha)_{\alpha<\kappa}$ is a descending chain in
$P^d$ whose intersection is not in $P^d$, and let
$(Y_\alpha)_{\alpha<\kappa}$ be a descending chain in $Q$. Put
\[
 R=\bigcup_{\alpha<\kappa}(X_\alpha^c\times Y_\alpha).
\]
The set of points with a $Q$-large row is
\[
 \bigcup_{\alpha<\kappa}X_\alpha^c
 =\left(\bigcap_{\alpha<\kappa}X_\alpha\right)^c\in P.
\]
Indeed, if $x$ belongs to this union and $\delta$ is least such that
$x\notin X_\delta$, then $R_x=Y_\delta\in Q$; otherwise $R_x=\varnothing$.

For fixed $y$, the column $R^y$ belongs to $P$ exactly when
$y\in\bigcap_{\alpha<\kappa}Y_\alpha$. In that case
\[
 R^y=\bigcup_{\alpha<\kappa}X_\alpha^c
     =\left(\bigcap_{\alpha<\kappa}X_\alpha\right)^c\in P.
\]
Otherwise, if $\delta$ is least such that $y\notin Y_\delta$, then
\[
 R^y=\bigcup_{\alpha<\delta}X_\alpha^c
 \subseteq X_\delta^c\notin P.
\]
Dominance therefore gives
$\bigcap_{\alpha<\kappa}Y_\alpha\in Q$.
\end{proof}

For $\kappa=\omega$, the two conclusions are the finite-intersection and DCC
conditions in the necessity direction of
\cite[Theorem~5]{AltmanPeterzilWinter2005}.  The proof above separates their
two relation constructions and shows that the chain argument is not
intrinsically countable.

The alternatives in Theorem~\ref{thm:alternatives} are independent
disjunctions.  In particular, the theorem does \emph{not} say that one of
$P^d,Q$ is chain-complete at every regular cardinal.  This distinction is
the central issue on uncountable domains.

\begin{corollary}\label{cor:uniform}
Let $\kappa$ be an infinite cardinal.
\begin{enumerate}
\item If $P$ is $\kappa$-small and $Q$ is $\kappa$-complete, then
      \eqref{eq:lower} holds.
\item If $Q^d$ is $\kappa$-small and $P^d$ is $\kappa$-complete, then
      \eqref{eq:upper} holds.
\item If $Q$ is $\operatorname{cf}(\lambda)$-chain-complete for every
      cardinal $\lambda$ with $\kappa\leq\lambda\leq|\D|$, and $P^d$
      is $\kappa$-complete, then \eqref{eq:upper} holds.
\item Dually, if $P^d$ is $\operatorname{cf}(\lambda)$-chain-complete
      for every cardinal $\lambda$ with
      $\kappa\leq\lambda\leq|\D|$, and $Q$ is $\kappa$-complete, then
      \eqref{eq:lower} holds.
\end{enumerate}
\end{corollary}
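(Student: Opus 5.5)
Items (1) and (2) are the core statements, and the remaining two reduce to them. Item (1) is exactly Lemma~\ref{lem:small-complete}, already proved. For item (2), I would apply Lemma~\ref{lem:small-complete} to the pair $(Q^d,P^d)$ in place of $(P,Q)$. Here $Q^d$ is $\kappa$-small and $P^d$ is $\kappa$-complete, so the lemma gives $Q^d\cdot P^d=Q^d\branch P^d$, which is \eqref{eq:upper}. Since $P^d$ and $Q^d$ are again proper upward monotone quantifiers, this application is legitimate. For the record, the lemma's proof itself only uses properness of the inner quantifier via Proposition~\ref{prop:branching} implicitly, and in fact it needs nothing but upward monotonicity.

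For item (3), I would reduce to (2) by producing $\kappa$-smallness of $Q^d$ from the chain-completeness hypothesis on $Q$. Lemma~\ref{lem:smallness} is stated for $Q$ with hypothesis on $Q^d$; I apply it to the quantifier $Q^d$, whose dual is $(Q^d)^d=Q$. Properness and upward monotonicity of $Q^d$ follow from those of $Q$, and double duality $(Q^d)^d=Q$ is immediate from the definition. Since $Q$ is $\operatorname{cf}(\lambda)$-chain-complete for every cardinal $\lambda$ with $\kappa\le\lambda\le|\D|$, the second clause of Lemma~\ref{lem:smallness} yields that $Q^d$ is $\kappa$-small. Together with $\kappa$-completeness of $P^d$, item (2) gives \eqref{eq:upper}.

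Item (4) is symmetric. I apply Lemma~\ref{lem:smallness} to $P$: the hypothesis that $P^d$ is $\operatorname{cf}(\lambda)$-chain-complete for all $\kappa\le\lambda\le|\D|$ gives that $P$ is $\kappa$-small. Combined with $\kappa$-completeness of $Q$, item (1) gives \eqref{eq:lower}.

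The only step needing care is the bookkeeping of duals in (3). I have to check that Lemma~\ref{lem:smallness} is applied to $Q^d$ rather than $Q$, and that members of $Q^d$ of size below $\kappa$ are automatic, which the ``repeatedly shrink'' argument covers. Beyond that, everything is a direct citation.
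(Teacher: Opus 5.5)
Your proposal is correct and matches the paper's proof: (1) and (2) are Lemma~\ref{lem:small-complete} applied to $(P,Q)$ and $(Q^d,P^d)$, and (3) and (4) use Lemma~\ref{lem:smallness} (applied to $Q^d$, respectively $P$) to get $\kappa$-smallness and then invoke (2), respectively (1). Your parenthetical remark that Lemma~\ref{lem:small-complete} uses properness ``via Proposition~\ref{prop:branching}'' is inaccurate, since that proof never invokes it, but the remark plays no role in the argument.
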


\begin{proof}
The first two clauses are Lemma~\ref{lem:small-complete}, with the second
applied to $Q^d,P^d$. For (3), Lemma~\ref{lem:smallness} makes $Q^d$
$\kappa$-small, so (2) applies. Clause (4) is dual.
\end{proof}

\section{The countable characterization}\label{sec:countable}

On a countable domain, the preceding alternatives reduce to their
$\omega$-versions, and the argument closes.

\begin{theorem}\label{thm:countable}
Let $\D$ be countable and let $P,Q$ be proper upward monotone
quantifiers on $\D$.  Then
\[
 P\dom Q \iff  \text{\eqref{eq:lower} or \eqref{eq:upper} holds.}
\]
\end{theorem}

\begin{proof}
The reverse implication was observed after Question~\ref{con:main}.
Suppose $P\dom Q$ and put $F=P^d$ and $G=Q$.  By
Theorem~\ref{thm:alternatives},  at least one of $F,G$ is a filter,
and at least one of $F,G$ is $\omega$-chain-complete.

Combining these two alternatives gives four cases. If $F$ is both a filter and
$\omega$-chain-complete, it is closed under countable intersections.  Since
$\D$ is countable,
\[
 A=\bigcap\set{\D\setminus\{a\}:\D\setminus\{a\}\in F}
\]
belongs to $F$.  Moreover, $A\subseteq B$ for every $B\in F$: if
$a\notin B$, then $B\subseteq\D\setminus\{a\}$, so
$\D\setminus\{a\}\in F$. Hence
\[
 F=\set{B\subseteq\D:A\subseteq B}.
\]
Thus $F=\forall_A$ and $P=F^d=\exists_A$, so \eqref{eq:lower} holds.
If $G$ is both a filter and
$\omega$-chain-complete, the same argument makes $G$ principal, and again
\eqref{eq:lower} holds.

Suppose next that $F$ is a filter and $G$ is
$\omega$-chain-complete.  By Lemma~\ref{lem:smallness}, $G^d=Q^d$ is
$\omega$-small.  Every filter is $\omega$-complete in the standard
``fewer than $\omega$'' sense.  Lemma~\ref{lem:small-complete}, applied to
$Q^d$ and $F$, gives \eqref{eq:upper}.  Finally, if $G$ is a filter and
$F$ is $\omega$-chain-complete, Lemma~\ref{lem:smallness} makes
$F^d=P$ $\omega$-small, and Lemma~\ref{lem:small-complete} gives
\eqref{eq:lower}.
\end{proof}

\begin{remark}
Theorem~\ref{thm:countable} gives the characterization stated as
Theorem~5 of Altman, Peterzil, and Winter
\cite{AltmanPeterzilWinter2005}. 
Their DCC is
our $\omega$-chain-completeness, their FIN is our $\omega$-smallness, and
their nontriviality clause is automatic under our standing properness
assumption.  
Thus the proof above follows the same four-case division.  The
branching-product formulation makes the conclusion of each mixed case
explicit: dominance is witnessed either by a $P$--$Q$ rectangle or,
after duality, by a $Q^d$--$P^d$ rectangle.

The proof above repairs the gap by a different route.  DCC is converted,
via Lemma~\ref{lem:smallness}, into the existence of finite witnesses,
and Lemma~\ref{lem:small-complete} then produces the required rectangle;
duality handles the other mixed case.  Thus the argument gives a corrected
proof of the countable characterization while isolating two ingredients
that extend to arbitrary cardinalities.
\end{remark}

\section{A counterexample}\label{sec:counterexample}

The positive cases above do not extend to all upward monotone quantifiers.
The reason is elementary: dominance is preserved by unions of outer
quantifiers, even when different components use different branching
alternatives.  The following observation makes this precise.

\begin{lemma}\label{lem:union-dominance}
Let $(P_i)_{i\in I}$ be a nonempty family of proper upward monotone
quantifiers on $\D$, and let $Q$ be such a quantifier.  If $P_i\dom Q$
for every $i\in I$, then
\[
 \bigcup_{i\in I}P_i\dom Q.
\]
\end{lemma}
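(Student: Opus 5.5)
The plan is to unwind the definitions directly; no earlier result is needed beyond upward monotonicity. Put $P=\bigcup_{i\in I}P_i$. First I will check that $P$ is again a proper upward monotone quantifier, so that the statement makes sense within the standing conventions. A union of upward closed families is upward closed. Since $I$ is nonempty and each $P_i$ contains $\D$, we get $\D\in P$. Since no $P_i$ contains $\varnothing$, neither does $P$.

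For dominance, let $R\in P\cdot Q$, so that $A=\set{x:R_x\in Q}\in P$. Then $A\in P_i$ for some $i\in I$, which means $R\in P_i\cdot Q$. Applying $P_i\dom Q$ gives $R^{-1}\in Q\cdot P_i$, that is,
\[
 \set{y:R^y\in P_i}\in Q.
\]
Because $P_i\subseteq P$, we have $\set{y:R^y\in P_i}\subseteq\set{y:R^y\in P}$. Upward monotonicity of $Q$ then gives $\set{y:R^y\in P}\in Q$, which is exactly $R^{-1}\in Q\cdot P$.

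There is no real obstacle here. The only point to stress is that the index $i$ is selected by the single set $A$ that witnesses $R\in P\cdot Q$. For this reason the argument never needs a uniform branching mechanism across the components. This is precisely why the union of outer quantifiers can mix \eqref{eq:lower}-type and \eqref{eq:upper}-type components, which is how the lemma will be used in the counterexample.
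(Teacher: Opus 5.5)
Your proof is correct and matches the paper's argument: choose $i$ with $\set{x:R_x\in Q}\in P_i$, apply $P_i\dom Q$, and pass to $\set{y:R^y\in\bigcup_{i\in I}P_i}$ by upward monotonicity of $Q$. Your preliminary check that the union is again proper and upward monotone is a small addition that the paper leaves implicit.
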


\begin{proof}
If $A=\{x:R_x\in Q\}\in\bigcup_{i\in I}P_i$, choose $i$ with $A\in P_i$.
Then $\{y:R^y\in P_i\}\in Q$.  This set is contained in
$\{y:R^y\in\bigcup_{i\in I}P_i\}$, which therefore belongs to $Q$ by
upward monotonicity.
\end{proof}

\begin{theorem}\label{thm:counterexample}
There are proper upward monotone quantifiers $P,Q$ on a domain of
cardinality $\aleph_2$ such that $Q$ is a filter and $P\dom Q$,
but neither \eqref{eq:lower} nor \eqref{eq:upper} holds.
\end{theorem}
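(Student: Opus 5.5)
The plan is to take $Q$ to be the co-countable filter on a set $\D$ with $|\D|=\aleph_2$. I would build $P$ as a union $P_1\cup P_2$ of two outer quantifiers supported on disjoint halves of the domain: $P_1\dom Q$ via \eqref{eq:lower}, $P_2\dom Q$ via \eqref{eq:upper}. Lemma~\ref{lem:union-dominance} then gives $P\dom Q$. The remaining work is to show that mixing the two mechanisms destroys both branching equalities.

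First, the cardinal data of $Q$. $Q$ is $\aleph_1$-complete, and $Q^d=Q^{\geq\aleph_1}$ means ``uncountably many''. Moreover $Q$ is $\omega_2$-chain-complete. Indeed, a descending $\omega_2$-chain of co-countable sets has increasing countable complements, and these stabilize because $\operatorname{cf}(\omega_2)>\omega_1$. (By contrast, $Q$ is not $\omega_1$-chain-complete.)

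Fix a partition $\D=\D_0\cup\D_1$ with $|\D_0|=|\D_1|=\aleph_2$, and define
\[
 P_1=\set{A:|A\cap \D_0|\geq\aleph_0},\qquad
 P_2=\set{A:|A\cap \D_1|=\aleph_2},\qquad P=P_1\cup P_2 .
\]
\begin{itemize}
\item $P_1$ is $\omega_1$-small, since every member contains a countably infinite subset of $\D_0$ that is still in $P_1$. As $Q$ is $\omega_1$-complete, Lemma~\ref{lem:small-complete} gives \eqref{eq:lower} for $(P_1,Q)$, hence $P_1\dom Q$.
\item $P_2^d=\set{A:|\D_1\setminus A|<\aleph_2}$ is $\omega_2$-complete: a union of $\aleph_1$ sets each of size at most $\aleph_1$ has size at most $\aleph_1$, since $\aleph_2$ is regular.
\item Corollary~\ref{cor:uniform}(3) with $\kappa=\omega_2$ needs only the case $\lambda=\aleph_2$, i.e.\ $\omega_2$-chain-completeness of $Q$, which we have. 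So \eqref{eq:upper} holds for $(P_2,Q)$, hence $P_2\dom Q$.
\end{itemize}
Both $P_i$ are proper, so $P$ is proper, and $P\dom Q$ follows from Lemma~\ref{lem:union-dominance}.

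To see that \eqref{eq:lower} fails for $(P,Q)$, I would use Proposition~\ref{prop:branching}. Take $A=\D_1\in P$, and for $x\in\D_1$ take the row $A_x=\D\setminus\{x\}\in Q$. Any $C\subseteq\D_1$ with $C\in P$ must lie in $P_2$, since $C\cap\D_0=\varnothing$; so $|C|=\aleph_2$. Then $\bigcap_{x\in C}A_x=\D\setminus C$ has uncountable complement and is not in $Q$.

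To see that \eqref{eq:upper} fails, apply Proposition~\ref{prop:branching} to the pair $(Q^d,P^d)$, noting that $P^d=P_1^d\cap P_2^d$. This consists of the sets that are cofinite on $\D_0$ and whose complement meets $\D_1$ in fewer than $\aleph_2$ points. Take $A\subseteq\D_0$ with $|A|=\aleph_1$, so $A\in Q^d$, and for $x\in A$ take the row $A_x=\D\setminus\{x\}\in P^d$. Any $C\subseteq A$ with $C\in Q^d$ is uncountable. Then $\bigcap_{x\in C}A_x=\D\setminus C$ is not cofinite on $\D_0$, so it is not in $P^d$.

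The main obstacle is choosing the two components so that their union does not collapse into one of them. The naive choice $Q^{\geq\aleph_0}\cup Q^{\geq\aleph_2}$ collapses to $Q^{\geq\aleph_0}$, which is exactly why the components are placed on disjoint halves $\D_0$ and $\D_1$. The other delicate point is checking that clause (3) of Corollary~\ref{cor:uniform} requires chain-completeness of $Q$ only at cofinality $\omega_2$, and not at $\omega_1$, where it fails.
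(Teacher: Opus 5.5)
Your proof is correct and follows the paper's architecture. $Q$ is the co-countable filter, and $P$ is the union of an $\aleph_1$-small component, which dominates via \eqref{eq:lower}, and the component ``$\aleph_2$ many points of a fixed set of size $\aleph_2$'', which dominates via \eqref{eq:upper}. The two are glued by Lemma~\ref{lem:union-dominance}, and \eqref{eq:lower} is defeated by essentially the same diagonal relation.

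The real difference is the first component. The paper takes the cofinite filter on a \emph{countable} set $X$. Then $P^d$ only asks that $A\cap X$ be infinite, so defeating \eqref{eq:upper} needs an almost disjoint family $(H_\alpha)_{\alpha<\omega_1}$ on $X$. You take ``infinitely many points of $\D_0$'' with $|\D_0|=\aleph_2$, whose dual is the filter of sets cofinite on $\D_0$. So your $P^d=P_1^d\cap P_2^d$ is itself a filter, and \eqref{eq:upper} already fails for the rows $\D\setminus\{x\}$, with $x$ ranging over $\aleph_1$ points of $\D_0$. No almost disjoint family is needed.

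Your example also shows a little more. The dichotomy fails even when $P^d$ and $Q$ are \emph{both} filters, so both disjuncts of Theorem~\ref{thm:alternatives}(1) hold. The failure then comes purely from the chain alternative switching between cardinals: $Q$ is chain-complete at $\omega$ and $\omega_2$, $P^d$ at $\omega_1$. Dually, ``uncountably many'' dominates the \emph{filter} $P^d$ with both equalities failing, which sharpens Remark~\ref{rem:outer-cardinality}.

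What the paper's choice buys is continuity with Section~\ref{sec:filters}. Its first component is a filter, so the same component and the same almost-disjoint argument are reused when the union is replaced by the intersection $F_0\cap\widehat U$ to get a two-filter counterexample. Your $P_1$ is not a filter and cannot play that role.

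Finally, going through Corollary~\ref{cor:uniform}(3) is correct but roundabout. $Q^d$ is $\aleph_2$-small directly, since every uncountable set has a subset of size $\aleph_1$. So the ``delicate point'' about chain-completeness at $\omega_1$ never arises.
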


\begin{proof}
Let $\D=X\sqcup Y$, where $|X|=\aleph_0$ and $|Y|=\aleph_2$, and define
\begin{align*}
 P&=\{A\subseteq\D:X\setminus A\text{ is finite}
                \text{ or }|A\cap Y|=\aleph_2\},\\
 Q&=\{A\subseteq\D:\D\setminus A\text{ is countable}\}.
\end{align*}
Both quantifiers are proper and upward monotone, and $Q$ is the
co-countable filter.  Their duals are
\begin{align*}
 P^d&=\{A\subseteq\D:A\cap X\text{ is infinite}
                    \text{ and }|Y\setminus A|\leq\aleph_1\},\\
 Q^d&=\{A\subseteq\D:A\text{ is uncountable}\}.
\end{align*}

\smallskip
\noindent\emph{Dominance.}
Write $P=P_0\cup P_1$, where
\[
 P_0=\{A\subseteq\D:X\setminus A\text{ is finite}\},\qquad
 P_1=\{A\subseteq\D:|A\cap Y|=\aleph_2\}.
\]
The quantifier $P_0$ is $\aleph_1$-small: for $A\in P_0$, the countable
set $A\cap X$ is still in $P_0$.  Since $Q$ is $\aleph_1$-complete,
Lemma~\ref{lem:small-complete} gives
\[
 P_0\cdot Q=P_0\branch Q,
\]
so $P_0\dom Q$.

On the other hand, $Q^d$ is $\aleph_2$-small, and
\[
 P_1^d=\{A\subseteq\D:|Y\setminus A|\leq\aleph_1\}
\]
is $\aleph_2$-complete: a union of at most $\aleph_1$ sets, each of
cardinality at most $\aleph_1$, still has cardinality at most $\aleph_1$.
The same lemma therefore gives
\[
 Q^d\cdot P_1^d=Q^d\branch P_1^d.
\]
By duality, $P_1\dom Q$.  Lemma~\ref{lem:union-dominance} now yields
$P\dom Q$.  Thus the first component uses the lower branching
alternative, whereas the second uses the upper one.

\smallskip
\noindent\emph{Failure of \eqref{eq:lower}.}
Consider the relation
\[
 T=\{(x,y)\in\D^2:x\in Y\text{ and }x\ne y\}.
\]
Its rows indexed by $Y$ are co-countable and its other rows are empty.
Since $Y\in P$, we have $T\in P\cdot Q$.
Suppose $C\times B\subseteq T$, with $C\in P$ and $B\in Q$.
Since $B\ne\varnothing$, the empty rows force $C\subseteq Y$.
Membership in $P$ then gives $|C|=\aleph_2$.  Since $B$ is co-countable,
there is $z\in C\cap B$.  But $(z,z)\in C\times B$ and $(z,z)\notin T$,
a contradiction.  Hence $T\notin P\branch Q$.

\smallskip
\noindent\emph{Failure of \eqref{eq:upper}.}
Choose infinite subsets $(H_\alpha)_{\alpha<\omega_1}$ of $X$ whose
pairwise intersections are finite.  Such a family can be constructed
explicitly in ZFC: identify $X$ with $2^{<\omega}$, the set of finite
binary sequences, choose distinct branches $r_\alpha\in2^\omega$ for
$\alpha<\omega_1$, and put
\[
 H_\alpha=\{r_\alpha\restriction n:n<\omega\}.
\]
There are at least $\aleph_1$ branches because $2^\omega$ is uncountable,
and two distinct branches have only finitely many common initial segments.

Choose distinct $k_\alpha\in Y$ for $\alpha<\omega_1$, and define a
relation $S$ by
\[
 S_{k_\alpha}=Y\cup H_\alpha\quad(\alpha<\omega_1),\qquad
 S_x=\varnothing\quad(x\notin\{k_\alpha:\alpha<\omega_1\}).
\]
Each $S_{k_\alpha}$ belongs to $P^d$.  There are uncountably many such
rows, so $S\in Q^d\cdot P^d$.
Suppose $C\times B\subseteq S$, with $C\in Q^d$ and $B\in P^d$.
Since $B\cap X$ is infinite, $B$ is nonempty, and the empty rows force
$C\subseteq\{k_\alpha:\alpha<\omega_1\}$.  The uncountable set $C$
contains distinct $k_\alpha,k_\beta$.  
Since $k_\alpha,k_\beta\in C$ and $C\times B\subseteq S$, we have
\[
 B\subseteq S_{k_\alpha}\cap S_{k_\beta}
 =(Y\cup H_\alpha)\cap(Y\cup H_\beta)
 =Y\cup(H_\alpha\cap H_\beta).
\]
Intersecting with $X$ therefore gives
\[
 B\cap X\subseteq H_\alpha\cap H_\beta,
\]
which is finite, contradicting $B\in P^d$.
 Thus
$S\notin Q^d\branch P^d$, and \eqref{eq:upper} fails.
\end{proof}

The counterexample uses only one filter: $Q$ is a filter, whereas $P$ is not,
since two disjoint subsets of $Y$ of cardinality $\aleph_2$ both belong to
$P$.  The case in which both quantifiers are filters is therefore separate
and will be treated in the next section.  

\begin{remark}
\label{rem:outer-cardinality}
The reversed pair of duals gives a counterexample with an ordinary
cardinality quantifier in the outer position. Write
$Q_1=Q^{\geq\aleph_1}$ for the quantifier ``uncountably many''.
In the construction above, $Q^d=Q_1$, so
Proposition~\ref{prop:duality} gives
\[
 Q_1\dom P^d,\qquad
 P^d=\{A\subseteq\D:A\cap X\text{ is infinite}
                   \text{ and }|Y\setminus A|\leq\aleph_1\}.
\]
For this pair the lower equality is the original upper equality, and the
upper equality is the original lower equality; both therefore fail.
Consequently, Example~\ref{ex:cardinality} cannot be extended merely by
requiring that one of the two quantifiers be a cardinality quantifier.
\end{remark}

\section{The two-filter case}\label{sec:filters}

The elementary counterexample of Section~\ref{sec:counterexample} has only
one filter.  The restriction of Question~\ref{con:main} to two filters has a
different status: it holds under $V=L$, while a measurable cardinal yields
a counterexample.  In fact, the existence of a two-filter counterexample
has exactly the consistency strength of a measurable cardinal.

The positive direction uses Donder's regularity theorem
\cite[Theorems~4.1 and~4.5]{Donder1988}; the formulation needed here is also
stated in \cite[Theorem~8.1]{Usuba2026}.  An ultrafilter on a set of
cardinality $\mu$ is \emph{uniform} if all its members have cardinality
$\mu$.  It is $(\omega,\lambda)$-regular if it contains a family
$(B_\xi)_{\xi<\lambda}$ such that each point belongs to only finitely many
$B_\xi$.  Donder's theorem says that, if there is no inner model with a
measurable cardinal, every uniform ultrafilter on an uncountable $\mu$ is
$(\omega,\mu)$-regular when $\mu$ is singular, and is
$(\omega,\lambda)$-regular for every infinite $\lambda<\mu$ when $\mu$ is
regular.  Consequently, in either case it supplies such a point-finite
family of length $\lambda$ whenever $\lambda$ is infinite and $\lambda<\mu$.
We use this theorem as the set-theoretic input.

\begin{lemma}\label{lem:filter-positive-chains}
Assume there is no inner model with a measurable cardinal.  Let $F$ be a
filter on $\D$, and suppose
\[
 \mu=\min\{|A|:A\in F\}
\]
is infinite.  For every infinite regular $\lambda\leq\mu$, there is a
descending $\lambda$-chain of members of $F^d$ with empty intersection.
\end{lemma}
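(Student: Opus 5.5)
The plan is to pass from the filter $F$ to a uniform ultrafilter on a set of minimal size, and to read off the required chains from that ultrafilter: by uniformity when $\lambda=\mu$, and by Donder's theorem when $\lambda<\mu$. Any member of an ultrafilter extending $F$ meets every member of $F$, so it lies in $F^d$. It therefore suffices to build descending $\lambda$-chains with empty intersection inside such an ultrafilter.

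\emph{Step 1: a uniform ultrafilter.} Fix $A\in F$ with $|A|=\mu$. Consider the family
\[
 \{B\cap A:B\in F\}\cup\{A\setminus S:S\subseteq A,\ |S|<\mu\}
\]
of subsets of $A$. It has the finite intersection property. Indeed, $B_1\cap\dots\cap B_n\cap A\in F$ has cardinality at least $\mu$ by minimality of $\mu$. Moreover, a finite union of sets of size $<\mu$ has size $<\mu$, because $\mu$ is infinite. Extend this family to an ultrafilter $U$ on $A$. Then every member of $U$ has size $\mu$, so $U$ is uniform. Also $U\supseteq\{B\cap A:B\in F\}$. Hence, if $Z\in U$ and $B\in F$, then $Z\cap B\supseteq Z\cap(B\cap A)\neq\varnothing$, so $\D\setminus Z\notin F$, i.e.\ $Z\in F^d$.

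\emph{Step 2: the case $\lambda=\mu$.} Here $\mu$ is regular. Enumerate $A=\{a_\beta:\beta<\mu\}$ and put $Z_\alpha=\{a_\beta:\alpha\le\beta<\mu\}$ for $\alpha<\mu$. Each $A\setminus Z_\alpha$ has size $|\alpha|<\mu$, so $Z_\alpha\in U$ by construction of $U$. The chain is descending and $\bigcap_\alpha Z_\alpha=\varnothing$. This step needs no set-theoretic hypothesis, and it covers $\mu=\omega$.

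\emph{Step 3: the case $\lambda<\mu$.} Now $\mu$ is uncountable and $\lambda$ is infinite with $\lambda<\mu$. By Donder's theorem, in the form recorded above, $U$ contains a point-finite family $(B_\xi)_{\xi<\lambda}$. Put $Z_\alpha=\bigcup_{\alpha\le\xi<\lambda}B_\xi$. Each $Z_\alpha$ contains $B_\alpha\in U$, so $Z_\alpha\in U$, and the sequence is descending. Suppose a point $p$ lies in every $Z_\alpha$. Then the set $\{\xi:p\in B_\xi\}$ is unbounded in $\lambda$, hence infinite, which contradicts point-finiteness. So $\bigcap_{\alpha<\lambda}Z_\alpha=\varnothing$. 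By Step 1, $(Z_\alpha)_{\alpha<\lambda}$ is a descending chain in $F^d$ with empty intersection.

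The main obstacle is already absorbed by Donder's theorem. The only delicate points are arranging that $U$ is uniform, so that the theorem applies, and checking that point-finiteness turns tails into a chain with empty intersection. Regularity of $\lambda$ is not really needed in Step 3; it is used only in Step 2, where $\lambda=\mu$.
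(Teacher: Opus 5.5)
Your proof is correct and follows the paper's argument essentially step for step. You extend the trace $F\restriction A$ together with the co-small subsets of $A$ to a uniform ultrafilter, observe that its members lie in $F^d$, and take tails of an enumeration of $A$ when $\lambda=\mu$ and tails of a Donder point-finite family when $\lambda<\mu$. (A minor aside: regularity is not actually used in Step 2 either, since the tail chain of an enumeration of $A$ works for any infinite $\mu$.)
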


\begin{proof}
Choose $A\in F$ of cardinality $\mu$.  Every member of the  filter
\[
 F\restriction A=\{C\cap A:C\in F\}
\]
has cardinality $\mu$, by the minimality of $\mu$.  Now, the set 
\[
 (F\restriction A) \cup \set{ A \setminus S  : S \subseteq A, |S| < \mu }
 \]
has the finite intersection property: the intersection of finitely many trace members
still has size $\mu$, and removing fewer than $\mu$ points cannot make it
empty.  Extend this set to an ultrafilter $W$ on $A$.  It is
uniform, since it contains the complement of every subset of $A$ of size
less than $\mu$.

Every $B\in W$, regarded as a subset of $\D$, belongs to $F^d$.
Indeed, if $\D\setminus B\in F$, then
\[
 A\setminus B=A\cap(\D\setminus B)\in F\restriction A\subseteq W,
\]
contradicting $B\in W$, since $W$ is a proper filter on $A$.

If $\lambda=\mu$ (so $\mu$ is regular), enumerate
$A=\{a_\xi:\xi<\mu\}$ and use the tails
\[
 T_\alpha=A\setminus\{a_\xi:\xi<\alpha\}\qquad(\alpha<\mu).
\]
Uniformity gives $T_\alpha\in W$, and their intersection is empty.  This
also deals with the case $\mu=\aleph_0$.

If $\lambda<\mu$, then $\mu$ is uncountable, and Donder's theorem supplies
a point-finite family $(B_\xi)_{\xi<\lambda}$ in $W$.  Put
\[
 T_\alpha=\bigcup_{\alpha\leq\xi<\lambda}B_\xi
 \qquad(\alpha<\lambda).
\]
Each $T_\alpha$ contains $B_\alpha$ and hence belongs to $W$.  The chain
is descending, and its intersection is empty because a point belongs to
only finitely many $B_\xi$.  In both cases it is a chain in $F^d$ as
required.
\end{proof}

\begin{theorem}
\label{thm:filters-no-inner-measurable}
Assume there is no inner model with a measurable cardinal.  Let $F,G$ be
filters on $\D$.  Then
\[
 F\dom G\iff F\cdot G=F\branch G 
\]
\end{theorem}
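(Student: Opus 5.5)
The reverse implication was already observed after Question~\ref{con:main}: equality \eqref{eq:lower} always yields dominance. For the forward direction, assume $F\dom G$. The plan is to produce a cardinal $\kappa$ such that $F$ is $\kappa$-small and $G$ is $\kappa$-complete, and then invoke Lemma~\ref{lem:small-complete}. The natural choice is $\kappa=\mu^+$, where
\[
 \mu=\min\{|A|:A\in F\}.
\]

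First dispose of the case where $\mu$ is finite. Then $F$ is $\omega$-small. Indeed, fix $A_0\in F$ with $|A_0|=\mu$; for any $A\in F$, the set $A\cap A_0$ lies in $F$ because $F$ is a filter, and it is finite. Since $G$ is a filter, it is $\omega$-complete in the ``fewer than $\omega$'' sense. Lemma~\ref{lem:small-complete} therefore gives $F\cdot G=F\branch G$.

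Now suppose $\mu$ is infinite. By Lemma~\ref{lem:filter-positive-chains}, for every infinite regular $\lambda\leq\mu$ there is a descending $\lambda$-chain in $F^d$ whose intersection is empty. The empty set is not in $F^d$, since $\D\in F$. Hence $F^d$ is not $\lambda$-chain-complete for any such $\lambda$. Theorem~\ref{thm:alternatives}(2), applied with $P=F$ and $Q=G$, then forces $G$ to be $\lambda$-chain-complete for every infinite regular $\lambda\leq\mu$. The infinite regular cardinals below $\mu^+$ are exactly those $\leq\mu$, and $G$ is a filter. So Lemma~\ref{lem:chain-to-complete} makes $G$ $\mu^+$-complete.

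On the other side, $F$ is $\mu^+$-small by the same argument as in the finite case. Fix $A_0\in F$ with $|A_0|=\mu$; for any $A\in F$, the set $A\cap A_0\in F$ is a subset of $A$ of cardinality at most $\mu<\mu^+$. Lemma~\ref{lem:small-complete}, applied with $\kappa=\mu^+$, then yields $F\cdot G=F\branch G$.

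The substantive set-theoretic input, Donder's regularity theorem, is already packaged in Lemma~\ref{lem:filter-positive-chains}. Thus the only point needing care is the bookkeeping of cardinals: the chain failures for $F^d$ must cover every regular $\lambda$ up to and including $\mu$. The case $\lambda=\mu$, which is relevant only when $\mu$ is regular, is handled by the tail chain in that lemma. This ensures that the resulting completeness of $G$ reaches $\mu^+$, matching the smallness of $F$. I expect no further obstacle. In particular, no case split according to which alternative of Theorem~\ref{thm:alternatives}(1) holds is needed, since both $F$ and $G$ are assumed to be filters.
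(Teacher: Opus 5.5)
Your proof is correct and follows the paper's argument: in the infinite case you combine Lemma~\ref{lem:filter-positive-chains}, the chain alternative of Theorem~\ref{thm:alternatives}, Lemma~\ref{lem:chain-to-complete} with $\kappa=\mu^+$, and Lemma~\ref{lem:small-complete}, exactly as the paper does. The only difference is cosmetic and lies in the finite case. There the paper identifies $F=\forall_A$ and applies Proposition~\ref{prop:branching}, whereas you use the $\omega$-smallness of $F$ together with Lemma~\ref{lem:small-complete}; both routes are immediate.
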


\begin{proof}
Suppose first that $\mu=\min\{|A|:A\in F\}$ is infinite and $F\dom G$.  By
Lemma~\ref{lem:filter-positive-chains}, $F^d$ fails
$\lambda$-chain-completeness for every infinite regular
$\lambda\leq\mu$.  The chain alternative in
Theorem~\ref{thm:alternatives} therefore makes $G$
$\lambda$-chain-complete for every such $\lambda$.  Since $G$ is a filter,
Lemma~\ref{lem:chain-to-complete}, with $\kappa=\mu^+$, shows that $G$ is
$\mu^+$-complete.

Choose $A\in F$ with $|A|=\mu$.  For every $C\in F$, the set
$C\cap A$ still belongs to $F$ and has cardinality at most $\mu$.
Thus $F$ is $\mu^+$-small.  Since $G$ is $\mu^+$-complete,
Lemma~\ref{lem:small-complete} gives
\[
 F\cdot G=F\branch G.
\]

If $\mu$ is finite, choose $A\in F$ with $|A|=\mu$.  For every
$C\in F$, we have $A\cap C\in F$; by the minimality and finiteness of
$|A|$, this forces $A\cap C=A$.  Hence $A\subseteq C$ for every
$C\in F$, and therefore
\[
 F=\forall_A.
\]
Since $G$ is closed under finite intersections,
Proposition~\ref{prop:branching} gives
$F\cdot G=F\branch G$.
\end{proof}

Note that the proof gives a stronger statement: 
\[
 F\dom G\iff F\cdot G=F\branch G \iff G \text{ is $\mu^+$-complete,}
\]
when $\mu=\min\{|A|:A\in F\}$ is infinite.

Under $V=L$, every inner model is $L$, since $L$ is contained in every
inner model.  Scott's theorem \cite{Scott1961} shows that $L$ has no
measurable cardinal.  Hence the hypothesis of the theorem holds under
$V=L$.

The next construction shows why the inner-model hypothesis cannot simply
be dropped.  It uses a finite intersection of outer filters rather than
the union of Section~\ref{sec:counterexample}.  Finite intersections of
outer quantifiers preserve dominance when the inner quantifier is a
filter, so the two branching mechanisms can still be combined without
losing finite-intersection closure.

\begin{theorem}
\label{thm:measurable-filter-counterexample}
If there is a measurable cardinal $\kappa$, then there are two proper
non-maximal filters $F,G$ on a domain of cardinality $\kappa$ such that
$F\dom G$, but
\[
 F\cdot G\ne F\branch G,
 \qquad
 G^d\cdot F^d\ne G^d\branch F^d.
\]
\end{theorem}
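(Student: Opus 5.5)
The plan is to imitate the mechanism of Theorem~\ref{thm:counterexample}, but replacing the union of outer quantifiers by a finite intersection, so that the outer quantifier stays a filter. The first step is a preservation lemma, the analogue of Lemma~\ref{lem:union-dominance}. Let $G$ be a filter and suppose $F_0\dom G$ and $F_1\dom G$.
\begin{itemize}
\item If $\{x:R_x\in G\}\in F_0\cap F_1$, then both $\{y:R^y\in F_0\}$ and $\{y:R^y\in F_1\}$ lie in $G$.
\item Their intersection lies in $G$, since $G$ is closed under finite intersections.
\item That intersection is contained in $\{y:R^y\in F_0\cap F_1\}$.
\end{itemize}
Hence $F_0\cap F_1\dom G$. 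Since an intersection of two filters is a filter, this keeps us inside the two-filter setting.

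Next I would fix the domain and the two components. Let $\D=X\sqcup K$ with $|K|=\kappa$, and fix a $\kappa$-complete nonprincipal (normal) ultrafilter $U$ on $K$. The inner filter $G$ should be $\kappa$-complete and non-maximal. My first choice is $G=\{A:A\cap K_0\in U_0\text{ and }A\cap K_1\in U_1\}$ for two disjoint pieces $K_0,K_1$ of $K$ carrying copies of the measure. The two outer components are meant as follows.
\begin{itemize}
\item $F_0$ is a $\kappa$-small filter, for instance a filter concentrating on the small part $X$. Then Lemma~\ref{lem:small-complete} gives \eqref{eq:lower} for $(F_0,G)$, hence $F_0\dom G$.
\item $F_1$ is chosen so that $F_1^d$ is $\kappa^+$-complete, or at least $\lambda$-complete for a $\lambda$ with $G^d$ $\lambda$-small. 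Then Lemma~\ref{lem:small-complete} applied to $G^d,F_1^d$ gives \eqref{eq:upper} for $(F_1,G)$, and Proposition~\ref{prop:duality} gives $F_1\dom G$.
\end{itemize}
Measurability is exactly what makes $G$ complete enough for the small component without being principal, and Theorem~\ref{thm:filters-no-inner-measurable} shows that it cannot be avoided. Then set $F=F_0\cap F_1$; the preservation lemma gives $F\dom G$. Non-maximality of $F$ and $G$ is checked directly.

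The last step is to show that both equalities fail for $(F,G)$, by analogues of the relations $T$ and $S$ of Theorem~\ref{thm:counterexample}.
\begin{itemize}
\item For \eqref{eq:lower}, I would take a relation whose $G$-large rows sit on a set in $F$ that forces any rectangle $C\times B$ with $C\in F$ to have $|C|=\kappa$ inside $K$. The rows would be built from $U$, for example $R_x=\{y\in K:y>x\}$. Then $\bigcap_{x\in C}R_x=\varnothing$ for unbounded $C$, so no rectangle fits.
\item For \eqref{eq:upper}, I would use an almost-disjoint family on the small part, as with the $H_\alpha$, so that any $G^d$-large set of rows has intersections too thin to be $F^d$-large.
\end{itemize}

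I expect the main obstacle to be calibrating $F_0$, $F_1$ and $G$ simultaneously. $G$ must be complete enough that the small component yields \eqref{eq:lower}, and $G^d$ must be small enough relative to the completeness of $F_1^d$. Yet \eqref{eq:lower} must fail for $(F,G)$. The intended source of that failure is that passing to $F_0\cap F_1$ destroys the smallness enjoyed by $F_0$, because members of $F$ must also be $F_1$-large on $K$. Dually, \eqref{eq:upper} should fail because $F^d=F_0^d\cup F_1^d$ loses the completeness of $F_1^d$. I would first verify the chosen $G$ against these constraints using Theorem~\ref{thm:alternatives} as a sanity check. If one of $F_0^d,G$ turns out to be chain-complete at the wrong cofinality, I would adjust $G$, for example by intersecting $U$ with a cofinite filter on $X$, until the four branching computations come out as required.
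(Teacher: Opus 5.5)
Your finite-intersection preservation lemma is correct and is the device the paper uses, and your almost-disjoint family for killing \eqref{eq:upper} is also the paper's. The gap is your design constraint that the inner filter $G$ be $\kappa$-complete. On a domain of size $\kappa$, that constraint rules out a counterexample whatever outer filters you choose (a principal $G$ gives \eqref{eq:lower} trivially).

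A non-principal $\kappa$-complete filter $G$ on a domain of size $\kappa$ is never $\kappa$-chain-complete. Otherwise Lemma~\ref{lem:chain-to-complete} would make it $\kappa^+$-complete, hence principal. For your $G$ concretely, the tails of an enumeration of $K$ lie in $G$ by uniformity and intersect in $X\notin G$. So if $F\dom G$, Theorem~\ref{thm:alternatives}(2) makes $F^d$ $\kappa$-chain-complete. Now suppose every member of $F$ had size $\kappa$. Then the tail argument in the case $\lambda=\mu$ of Lemma~\ref{lem:filter-positive-chains}, which uses no set-theoretic hypothesis, produces a descending $\kappa$-chain in $F^d$ with empty intersection. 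Hence some member of $F$ has size less than $\kappa$, so $F$ is $\kappa$-small, and Lemma~\ref{lem:small-complete} gives \eqref{eq:lower}.

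The same obstruction shows up locally in your plan. Every member of $G^d=\{A:A\cap K_0\in U_0\text{ or }A\cap K_1\in U_1\}$ has size $\kappa$, so $G^d$ is not $\lambda$-small for any $\lambda\leq\kappa$. Your upper mechanism for $F_1$ would therefore need $F_1^d$ to be $\kappa^+$-complete. On a domain of size $\kappa$ that makes $F_1^d$ principal, so $F_1=\exists_A$, and then $F_0\cap F_1$ is still $\kappa$-small. Since $F_1$ is never specified, the ``calibration'' you postpone is the whole content of the proof, and as you set it up it cannot succeed.

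The missing idea is to put the measure on the outer side and keep $G$ weak. The paper takes $\D=X\sqcup Y$ with $X$ countable and $|Y|=\kappa$. It lets $G$ be the co-countable filter, which exists in ZFC, is only $\aleph_1$-complete, and has $\aleph_2$-small dual. The outer filter is $F=F_0\cap\widehat U$:
\begin{itemize}
\item $F_0$ is cofinite on $X$, hence $\aleph_1$-small, matching the $\aleph_1$-completeness of $G$;
\item $\widehat U=\{A:A\cap Y\in U\}$ for a $\kappa$-complete nonprincipal ultrafilter $U$ on $Y$.
\end{itemize}
Measurability is used exactly to get an outer component that is self-dual and $\aleph_2$-complete, so that $G^d\cdot\widehat U^d=G^d\branch\widehat U^d$, yet all of whose members have size $\kappa$, which destroys the smallness of $F$. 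Then $T=\{(x,y):x\neq y\}$ has co-countable rows and contains no $F$--$G$ rectangle, because $C\cap Y\in U$ is uncountable and so meets every co-countable $B$. The rows $S_{k_\alpha}=H_\alpha$ kill \eqref{eq:upper}, as you anticipated. Your proposed adjustment, intersecting $U$ with a cofinite filter on $X$, yields the paper's outer filter $F$, not a usable inner one.
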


\begin{proof}
Let $\kappa>\aleph_1$ be measurable, and let $U$ be a nonprincipal
$\kappa$-complete ultrafilter on a set $Y$ of cardinality $\kappa$.
Such an ultrafilter is uniform: if $B\subseteq Y$ has size less than
$\kappa$, intersecting the complements of its singletons shows
$Y\setminus B\in U$.
Let $\D=X\sqcup Y$, where $X$ is countably infinite, and put
\begin{align*}
 F&=\{A\subseteq\D:X\setminus A\text{ is finite and }A\cap Y\in U\},\\
 G&=\{A\subseteq\D:\D\setminus A\text{ is countable}\}.
\end{align*}
Both families are proper filters.  The filter $F$ is not an ultrafilter:
if $X=X_0\sqcup X_1$ with both parts infinite, neither $Y\cup X_0$ nor
its complement belongs to $F$.  The co-countable filter $G$ is also
non-maximal, as a partition of $\D$ into two uncountable sets shows.
Their duals are
\begin{align*}
 F^d&=\{A\subseteq\D:A\cap X\text{ is infinite or }A\cap Y\in U\},\\
 G^d&=\{A\subseteq\D:A\text{ is uncountable}\}.
\end{align*}

\smallskip
\noindent\emph{Dominance.}
Write $F=F_0\cap\widehat U$, where
\[
 F_0=\{A\subseteq\D:X\setminus A\text{ is finite}\},
 \qquad
 \widehat U=\{A\subseteq\D:A\cap Y\in U\}.
\]
As in Section~\ref{sec:counterexample}, $F_0$ is $\aleph_1$-small and
$G$ is $\aleph_1$-complete.  Hence $F_0\dom G$ by
Lemma~\ref{lem:small-complete}.  Also $G^d$ is $\aleph_2$-small, whereas
$\widehat U^d=\widehat U$ is $\aleph_2$-complete.  The same lemma and
duality give $\widehat U\dom G$.

To combine these two implications, suppose $R\in F\cdot G$.
Its set of $G$-large rows belongs to both $F_0$ and $\widehat U$.
Therefore both
\[
 J_0=\{y:R^y\in F_0\},\qquad
 J_1=\{y:R^y\in\widehat U\}
\]
belong to $G$.  Since $G$ is a filter,
$J_0\cap J_1=\{y:R^y\in F\}\in G$.  Thus $F\dom G$.

\smallskip
\noindent\emph{Failure of the lower equality.}
Let $T=\{(x,y)\in\D^2:x\ne y\}$.  Every row is co-countable, so
$T\in F\cdot G$.  If $C\in F$ and $B\in G$, then
$C\cap Y\in U$ has cardinality $\kappa$, and hence $C\cap B$ is
nonempty.  Any $z\in C\cap B$ gives a diagonal point in $C\times B$
which is not in $T$.  Thus $T$ contains no $F$--$G$ rectangle.

\smallskip
\noindent\emph{Failure of the upper equality.}
Choose an almost disjoint family $(H_\alpha)_{\alpha<\omega_1}$ of
infinite subsets of $X$, as constructed in the proof of
Theorem~\ref{thm:counterexample}, and distinct points
$k_\alpha\in Y$ for $\alpha<\omega_1$.  Define a relation $S$ by
\[
 S_{k_\alpha}=H_\alpha\quad(\alpha<\omega_1),\qquad
 S_x=\varnothing\quad(x\notin\{k_\alpha:\alpha<\omega_1\}).
\]
Every $H_\alpha$ belongs to $F^d$, so $S\in G^d\cdot F^d$.
Suppose $C\times B\subseteq S$, with $C\in G^d$ and $B\in F^d$.
The empty rows and $B\ne\varnothing$ force
$C\subseteq\{k_\alpha:\alpha<\omega_1\}$.  Since $C$ is uncountable,
it contains distinct $k_\alpha,k_\beta$, whence
\[
 B\subseteq H_\alpha\cap H_\beta.
\]
This makes $B$ a finite subset of $X$, so neither clause in the definition
of $F^d$ holds.  The contradiction proves that $S$ contains no
$G^d$--$F^d$ rectangle.
\end{proof}

\begin{corollary}\label{cor:filter-consistency}
The existence of a counterexample to Question~\ref{con:main} among pairs
of filters is equiconsistent with the existence of a measurable cardinal.
Moreover, under $V=L$ no such counterexample exists.
\end{corollary}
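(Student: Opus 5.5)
Equiconsistency has two directions, and each comes from a result already established. The second sentence of the corollary will also follow along the way.

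\emph{From a counterexample to a measurable.} Suppose ZFC together with ``there exist filters $F,G$ on some $\D$ with $F\dom G$ for which both \eqref{eq:lower} and \eqref{eq:upper} fail'' is consistent. Then in any model of this theory, \eqref{eq:lower} fails for a dominating pair of filters. By the contrapositive of Theorem~\ref{thm:filters-no-inner-measurable}, there must be an inner model with a measurable cardinal. Take such an inner model $M$ containing a measurable cardinal $\kappa$. By the definability of the relevant objects, one can pass to $L[\mathcal U]$ for a normal measure $\mathcal U\in M$ on $\kappa$; this is a model of ZFC with a measurable cardinal. Hence Con(ZFC + there is a counterexample) implies Con(ZFC + there is a measurable cardinal). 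The only point needing care is that ``there is an inner model with a measurable cardinal'' yields a model of ZFC + measurable. This is standard: the inner model itself satisfies ZFC and contains the measurable cardinal, because measurability of $\kappa$ is witnessed inside that model.

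\emph{From a measurable to a counterexample.} Conversely, in ZFC + ``there is a measurable cardinal'', Theorem~\ref{thm:measurable-filter-counterexample} supplies proper filters $F,G$ with $F\dom G$ for which both equalities fail. So Con(ZFC + measurable) implies Con(ZFC + counterexample). One small check is needed: the proof of that theorem assumes $\kappa>\aleph_1$. This is automatic, since measurable cardinals are inaccessible.

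\emph{The final sentence.} Under $V=L$, every inner model equals $L$. By Scott's theorem, $L$ has no measurable cardinal, so the hypothesis of Theorem~\ref{thm:filters-no-inner-measurable} holds. That theorem then gives \eqref{eq:lower} for every dominating pair of filters, so no counterexample exists.

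I expect no real obstacle. The only delicate step is stating the metamathematics correctly: equiconsistency is an implication between consistency statements, obtained by relativizing to an inner model in one direction and applying the theorem directly in the other.
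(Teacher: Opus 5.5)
Your proposal is correct and is essentially the paper's proof: the contrapositive of Theorem~\ref{thm:filters-no-inner-measurable} turns a two-filter counterexample into an inner model with a measurable cardinal, Theorem~\ref{thm:measurable-filter-counterexample} gives the converse direction, and Scott's theorem handles $V=L$. Your extra remarks make explicit two details the paper leaves implicit: passing to $L[\mathcal U]$ gives a definable inner model, so the consistency implication follows by relativization, and measurability is what guarantees the needed $\kappa>\aleph_1$.
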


\begin{proof}
By Theorem~\ref{thm:filters-no-inner-measurable}, any such counterexample
yields an inner model with a measurable cardinal.  Conversely,
Theorem~\ref{thm:measurable-filter-counterexample} produces such a
counterexample from a measurable cardinal.  The $V=L$ case follows from
Theorem~\ref{thm:filters-no-inner-measurable} and Scott's theorem.
\end{proof}

\section{Concluding remarks} 

The branching product provides a common, transpose-symmetric core for the two
iterated scope readings.  This separates the problem into three parts:
dominance between the iterations, reduction to branching, and the
completeness or smallness conditions that force such a reduction.  On
countable domains, dominance always yields one of the two branching
equalities.  On arbitrary domains, several important positive cases remain:
principal cases and the smallness/completeness conditions of Corollary~\ref{cor:uniform}, the inner-cardinality case of
Proposition~\ref{prop:inner-cardinality}, and, under GCH, the ultrafilter
case by Goldberg's theorem.

For general upward monotone quantifiers, however, Theorem~\ref
{thm:counterexample} shows that the dichotomy fails in ZFC.  The
counterexample exploits the fact that unions of outer quantifiers preserve
dominance, allowing the lower and upper branching mechanisms to be combined
while neither survives globally. By Remark~\ref{rem:outer-cardinality}, the
failure can even occur with ``uncountably many'' in outer scope.

For pairs of filters, the situation is more rigid.  In the absence of an inner
model with a measurable cardinal, Theorem~\ref
{thm:filters-no-inner-measurable} shows that dominance already implies the
lower branching equality.  
In particular, $V=L$ gives a positive
answer to Question~\ref{con:main} for all pairs of filters.

At the opposite end, a measurable cardinal yields the two-filter
counterexample of Theorem~\ref{thm:measurable-filter-counterexample}.  
Together with Donder's theorem, this gives the exact consistency strength
stated in Corollary~\ref{cor:filter-consistency}.  Thus the general problem
is already settled negatively in ZFC, while the two-filter case is
independent of ZFC, relative to the consistency of a measurable cardinal.
What remains is a finer structural analysis of dominance on arbitrary
domains, especially in universes admitting inner models with measurable
cardinals.

\section*{Acknowledgements and AI declaration}
\label{sec:ai-declaration}

The author used \mbox{ChatGPT} (OpenAI) during the development and revision
of this paper for mathematical exploration, literature searches, and
text editing.  The tool was prompted
with drafts of the manuscript and questions about proofs, counterexamples,
and special cases of the branching-product dichotomy.  Its assistance included proposing and developing the
inner-cardinality characterization in Proposition~\ref{prop:inner-cardinality},
the counterexample in Section~\ref{sec:counterexample},
and the arguments in Section~\ref{sec:filters} involving Donder's theorem,
a measurable-cardinal counterexample, and consistency strength.  
The author retains sole responsibility for the content of the paper,
including verification of all mathematical statements and proofs.

\section*{Funding}

The author was supported by the Swedish Research Council
(Vetenskapsrådet) under grant 2022-01685.

\bibliographystyle{abbrv}
\bibliography{refs}

\end{document}